\numberwithin{equation}{section}
\def\neg1{\text{\boldmath$1$}}
\def\cC{\mathcal C}
\def\cH{\mathcal H}
\def\cK{\mathcal K}
\def\cL{\mathcal L}
\def\cX{\mathcal X}
\def\bT{\mathbf T}
\def\Fp{{\mathbb{F}_p}}
\def\Fq{{\mathbb{F}_q}}
\def\Fql{\mathbb{F}_{q^\ell}}
\newtheorem{theorem}{Theorem}[section]
\newtheorem{lemma}[theorem]{Lemma}
\theoremstyle{definition}
\newtheorem{question}[theorem]{Question}
\theoremstyle{remark}
\newtheorem{remark}[theorem]{Remark}
\begin{document}
		
		\baselineskip=17pt

%\thanks{\today}

    \title[Complete arcs and curves]{Complete arcs arising from a 
generalization \\of the Hermitian curve}

\author[H.\, Borges]{Herivelto Borges} 
  \address{Universidade de S\~ao Paulo, 
Instituto de Ci\^encias Matem\'aticas e de Computa\c{c}\~ao, 
S\~ao Carlos, SP 13560-970, Brazil}
   \email{hborges@icmc.usp.br}

\author[B.\, Motta]{Beatriz Motta}
  \address{Departamento de Matem\'atica, Instituto de Ci\^encias Exatas, 
Universidade Federal de Juiz de Fora, Rua Jos\'e Louren\c{c}o Kelmer, 
s/n - Campus Universit\'ario, Bairro S\~ao Pedro, 36036-900, Juiz de 
Fora, MG, Brazil} 
  \email{beatriz@ice.ufjf.br}

\author[F.\, Torres]{Fernando Torres}
  \address{University of Campinas (UNICAMP), Institute of Mathematics, 
Statistics and Computer Science (IMECC), R. 
S\'ergio Buarque de Holanda, 651, Cidade 
Universit\'aria \lq\lq Zeferino Vaz", 13083-059, Campinas, SP, Brazil}
  \email{ftorres@ime.unicamp.br}

%%%%%%%%%%%%%%%%%%%%%%%%%%%%%%%%%%%%%%%%%%%%%%%%%%%%%%%%%%%%%%%%%%%%%%%%%%%%%%%%%%%%%%%%%%%%%%%%%%%%%%%%%%%%%%%%%%%%%%%%%

   \begin{abstract} We investigate complete 
arcs of degree greater than two, in projective 
planes over finite fields, arising from the set 
of rational points of a generalization of the 
Hermitian curve. The degree of the arcs is 
closely related to the number of rational 
points of a class of Artin--Schreier curves 
which is calculated by using exponential sums 
via Coulter's approach. We also single out some 
examples of maximal curves.
   \end{abstract}
	
	\subjclass[2010]{Primary 05B; Secondary 14H}
\keywords{finite field, plane arc, Hermitian curve, 
Artin--Schreier curve}

\maketitle

    \section{Introduction}\label{s1}

Let $\Fq$ and $PG(2,q)$ denote the finite field of order $q$ and the
projective plane over $\Fq$, respectively. A pointset $\cK\subseteq PG(2,q)$ of size $N$ is
called an {\em arc} of {\em degree} $d$ or, simply, an {\em $(N,d)$-arc} if no
line of $PG(2,q)$ meets $\cK$ in more than $d$ points. The $(N,d)$-arc $\cK$
is called {\em complete} if it is not contained in an $(N+1,d)$-arc; that is,
if for every point $P\in PG(2,q)\setminus\cK$ there is a line through $P$
meeting $\cK$ in exactly $d$ points. A basic problem in Finite Geometry is
the existence and uniqueness of complete arcs. For basic facts on these
objects, the reader is referred to the book {\cite{hirschfeld}} by Hirschfeld.

Throughout this paper by a {\em plane curve} we 
shall mean a projective, geometrically 
irreducible plane curve. Let $\cX$ be a plane 
curve of degree $d$ defined over $\Fq$. The set of the 
$\Fq$-rational points of 
$\cX$ in $PG(2,q)$, denoted by $\cX(\Fq)$, is a natural example of an 
$(N,d)$-arc with $N=\#\cX(\Fq)$ (B{\'e}zout's 
Theorem). The problem of the completeness of 
$\cX(\Fq)$ as an $(N,d)$-arc was raised by 
Hirschfeld and Voloch in 1988 
{\cite{hirschfeld-voloch}}. For instance, if 
$\cX$ is a conic in odd characteristic or the 
Hermitian curve, namely the plane curve defined 
by the affine equation $y^{q+1}=x^q+x$ over 
$\mathbb F_{q^2}$, then the set of rational 
points of such a curve is an example of a 
complete arc; see for example {\cite[Lemma 7.20, Ch. 
8]{hirschfeld}}. A generalization of the 
Hermitian curve is given by an 
$\Fq$-{\em Frobenius nonclassical} plane curve; 
that is, a plane curve over $\Fq$ such that the 
$\Fq$-Frobenius map takes each nonsingular 
point of the curve to the tangent line at that 
point (cf. {\cite{hefez-voloch}}). Such curves 
are usually equipped with a large number of 
rational points (loc. cit.) so that one can 
expect to handled examples of complete arcs of 
large size compared with their degrees. 
Recently Giulietti et al. {\cite{italianos}} and 
Borges {\cite{hb}} studied the set of 
$\Fq$-rational points of further examples of 
$\Fq$-Frobenius nonclassical plane curves that 
also give rise to complete arcs. For background 
on curves over finite fields we refer to the 
book {\cite{HKT}}.

Any $(N,d)$-arc arising 
from a plane curve gives rise to an algebraic geometry (AG) code 
with parameters $[N,3,N-d']$, $d'\leq d$; see, for example, 
 {\cite[Sect. 3.1.1]{TV}}. Here, if the arc is 
complete, the corresponding code has minimum 
distance equal to $N-d$ and it cannot be 
extended to a code with larger minimum 
distance. This is analogous to the well known 
relation between complete $(N,2)$-arcs and 
non-extendable MDS codes (loc. cit.).

In this article we investigate $(N,d)$-arcs 
derived from the set of rational points of a 
Frobenius nonclassical curve introduced by 
Borges and Concei\c c\~ao in {\cite{hb2}} (see 
Section \ref{s2} here) and which is a natural 
generalization of a Hermitian curve. Our main 
result is Theorem \ref{main-result}. The 
computation of the degree of the corresponding 
arcs is closely related to
 the study of rational points of a class of Artin-Schreier curves, see 
(\ref{n=qm}); here Coulter's approach 
{\cite{c1}},{\cite{c2}},{\cite{c4}},{\cite{c3}} is used. By taking advantage of 
the aforementioned computation regarding rational points, we slightly 
extend some results of Wolfmann {\cite{wolfmann}} and Coulter {\cite{c3}} by 
pointing out some examples of maximal curves of Artin--Schreier type; see 
Theorem \ref{max}.

    \section{The curve $\cH$}\label{s2}

Let $q$ be a power of a prime $p$. Let $\ell$ be 
an integer with $\ell\geq 2$ and 
define $r=r(\ell)$ as  
the smallest integer $r\geq\ell/2$ such that $\gcd(\ell,r)=1$; that is,
   \begin{equation}\label{r1}
r= \begin{cases} 
1, & \text{ if }\quad \ell=2\\
\ell/2+1, & \text{ if }\quad \ell\equiv0\pmod4\\
\ell/2+2, & \text{ if }\quad \ell\geq 6,\, \ell\equiv2\pmod4\\
(\ell+1)/2, & \text{ if }\quad \ell \text{ is odd}\, .
   \end{cases} 
  \end{equation}
For a symbol $z$, set 
   $$
\bT(z):=z^{q^{\ell-1}}+z^{q^{\ell-2}}+\cdots+z\, .
   $$ 
In particular, $\bT: \Fql\to\Fq$ denotes the trace map from 
$\Fql$ to $\Fq$. In {\cite{hb2}} the plane curve $\cH$ defined 
by the affine equation
   $$
\bT(y)=\bT(x^{q^r+1})\,\quad \pmod{x^{q^{\ell}}-x}
   $$
over $\Fql$ was considered. The main properties of this curve are 
listed below.
   \begin{theorem}{\rm ({\cite{hb2}})}\label{herivelto} Let $\ell$ and $r$ be as abose. Suppose $p>2$ if $\ell=2$. The curve $\cH$ has 
degree $q^{\ell-1}+q^{r-1}$, genus $q^r(q^{\ell-1}-1)/2$ 
and its number of $\Fql$-rational points in 
$PG(2,q^{\ell})$ is $q^{2\ell-1}+1$. It has 
just one point at infinity of projective coordinates $(X:Y:Z)=(0:1:0)$ 
which is also its only singular point whenever $\ell\geq 3$. Furthermore, the 
curve is $\Fql$-Frobenius nonclassical. 
   \end{theorem}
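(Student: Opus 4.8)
The plan is to read every assertion off the affine model $\bT(y)=f(x)$ of $\cH$, where $f\in\Fql[x]$ is the reduction of $\bT(x^{q^r+1})$ modulo $x^{q^\ell}-x$, so that $f(a)=\bT(a^{q^r+1})$ for all $a\in\Fql$. First I would make $f$ explicit. Reducing $\bT(x^{q^r+1})=\sum_{k=0}^{\ell-1}x^{q^{k+r}+q^k}$ modulo $x^{q^\ell}-x$ replaces each exponent $q^{k+r}$ by $q^{(k+r)\bmod\ell}$, so $f(x)=\sum_{k=0}^{\ell-1}x^{q^{\sigma(k)}+q^k}$ with $\sigma(k):=(k+r)\bmod\ell$. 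Because $\gcd(\ell,r)=1$ the permutation $\sigma$ of $\ZZ/\ell\ZZ$ has no fixed point, and for $\ell\ge3$ it has no $2$-cycle either (as $2r\not\equiv0\pmod\ell$), so uniqueness of base-$q$ expansions makes the $\ell$ exponents $q^{\sigma(k)}+q^k$ pairwise distinct; their maximum is $q^{\ell-1}+q^{r-1}$, attained at $k=\ell-1$ precisely because $r>\ell/2$. Hence $\deg f=q^{\ell-1}+q^{r-1}>q^{\ell-1}=\deg_y\bT(y)$, so $\deg\cH=q^{\ell-1}+q^{r-1}$ (for $\ell=2$, $p>2$, directly $f=2x^{q+1}$ has degree $q+1$).

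For the singular points I would use $\partial(\bT(y)-f(x))/\partial y=\bT'(y)=1$, which excludes affine singular points. Homogenizing, the homogeneous component of degree $\deg f$ of $\bT(y)-f(x)$ is $-x^{\deg f}$, so the only point of $\cH$ at infinity is $P_\infty=(0:1:0)$; and in the chart $Y=1$ the local equation $\sum_{i=0}^{\ell-1}Z^{\deg f-q^i}-\sum_{e}X^{e}Z^{\deg f-e}$ (the second sum over the exponents $e$ of $f$) has $Z^{q^{r-1}}$ as its unique term of least degree, so $\cH$ has multiplicity $q^{r-1}$ at $P_\infty$ — nonsingular there iff $r=1$, i.e.\ $\ell=2$, and otherwise $P_\infty$ is the unique singular point. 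The $\Fql$-point count is then immediate: an affine $\Fql$-point $(a,b)$ lies on $\cH$ iff $\bT(b)=f(a)=\bT(a^{q^r+1})$, i.e.\ iff $b-a^{q^r+1}\in\ker(\bT\colon\Fql\to\Fq)$, a set of size $q^{\ell-1}$; this yields $q^\ell\cdot q^{\ell-1}=q^{2\ell-1}$ affine points, and with $P_\infty$ we get $q^{2\ell-1}+1$.

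For the genus and geometric irreducibility I would study the degree-$q^{\ell-1}$ morphism $x\colon\cH\to\PP^1$, which is unramified away from $x=\infty$ since $f$ is a polynomial and $\bT'=1$. Factoring $\bT$ over $\barF$ as a composition of $\ell-1$ additive polynomials of degree $q$ of the form $t\mapsto t^q-a_jt$ displays $\barF(x,y)/\barF(x)$ as a tower of $\ell-1$ generalized Artin--Schreier steps of degree $q$. At the bottom step $z^q-az=f(x)$, $f$ has pole order $q^{\ell-1}+q^{r-1}$ at infinity; successive substitutions $z\mapsto z+s(x)$, each cancelling the current leading monomial, lower that pole, and the crucial point — exactly where the choice of $r$ as the least integer $\ge\ell/2$ coprime to $\ell$ enters — is that the reduction stabilizes at pole order $q^r+1$, prime to $p$. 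That step is then totally ramified over $\infty$, hence a nontrivial (so irreducible) extension, and Riemann--Hurwitz gives it genus $(q-1)q^r/2$. Climbing the tower, the analogous reduction at each level leaves a pole order prime to $p$ at the single place over $\infty$, so every step stays totally ramified over $\infty$ — whence $\cH$ is geometrically irreducible — and the Riemann--Hurwitz contributions telescope to $g(\cH)=q^r(q^{\ell-1}-1)/2$. I expect this genus bookkeeping to be the main obstacle: one must show in general, not case by case, that the standard-form reduction stabilizes at $q^r+1$, and track the wild different accumulated along the tower.

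Finally, for $\Fql$-Frobenius nonclassicality I would use the criterion that a plane curve $\{F=0\}$ is $\Fql$-Frobenius nonclassical iff $(x^{q^\ell}-x)F_x+(y^{q^\ell}-y)F_y\equiv0\pmod F$. With $F=\bT(y)-f(x)$ one has $F_x=-f'$ and $F_y=1$, while the polynomial identity $\bT(y)^q-\bT(y)=y^{q^\ell}-y$ gives $y^{q^\ell}-y=f(x)^q-f(x)$ on $\cH$; since $F$ has positive degree in $y$, the criterion reduces to the one-variable identity $f(x)^q-f(x)=(x^{q^\ell}-x)f'(x)$. I would prove it by first computing $f'(x)=x^{q^r}+x^{q^{\ell-r}}$ — the coefficient $q^{\sigma(k)}+q^k$ of $f$ is divisible by $p$ unless $\sigma(k)=0$ or $k=0$, and those two indices contribute exactly $x^{q^r}$ and $x^{q^{\ell-r}}$ — and then reindexing $f(x)^q=\sum_k x^{q^{\sigma(k)+1}+q^{k+1}}$: for every $k$ except $k=\ell-1$ and $k=\ell-1-r$ (where the shift $k\mapsto k+1$, resp.\ $\sigma(k)\mapsto\sigma(k)+1$, reaches $\ell$), the monomial of $f^q$ indexed by $k$ coincides with the monomial of $f$ indexed by $k+1$ and cancels in $f^q-f$, leaving only $x^{q^\ell+q^r}+x^{q^\ell+q^{\ell-r}}$ from those two exceptional indices together with $-x^{q^r+1}-x^{q^{\ell-r}+1}$ (the monomials of $f$ at $k=0$ and $k=\ell-r$, never produced by $f^q$); this is exactly $(x^{q^\ell}-x)(x^{q^r}+x^{q^{\ell-r}})=(x^{q^\ell}-x)f'(x)$.
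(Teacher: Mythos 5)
First, note that the paper itself offers no proof of Theorem \ref{herivelto}: the statement is quoted from the preprint \cite{hb2}, so there is no in-paper argument to compare yours against, and your proposal has to stand on its own. Judged that way, a substantial part of it is correct and complete. The explicit form $f(x)=\sum_{k}x^{q^{\sigma(k)}+q^k}$ with $\sigma(k)=(k+r)\bmod\ell$, the degree count $q^{\ell-1}+q^{r-1}$ (using $\gcd(\ell,r)=1$ and $r>\ell/2$ to rule out coincidences among the exponents, with the separate check $f=2x^{q+1}$ when $\ell=2$, $p>2$), the absence of affine singularities from $F_y=1$, the multiplicity $q^{r-1}$ at $(0:1:0)$, the count $q^\ell\cdot\#\ker(\bT)+1=q^{2\ell-1}+1$, and the verification of $f(x)^q-f(x)=(x^{q^\ell}-x)f'(x)$ with $f'(x)=x^{q^r}+x^{q^{\ell-r}}$ are all sound.

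The genuine gap is the genus together with geometric irreducibility, which you only sketch and whose key step you explicitly defer. Your plan --- factor $\bT$ over $\barF$ as a composition of $\ell-1$ separable additive polynomials of degree $q$ and climb the resulting tower --- is viable in principle, but everything hinges on the unproved assertion that at each level the right-hand side can be put in reduced form with pole order prime to $p$, and that these reduced pole orders are exactly those needed for Riemann--Hurwitz (equivalently, the conductor--discriminant formula over the elementary abelian group $\ker\bT$) to telescope to $q^r(q^{\ell-1}-1)/2$. This is not routine: the substitution $f\mapsto f+(s^q-as)$ pushes an exponent $q^i+q^j$ down the chain $q^{i-1}+q^{j-1},\dots$ until one of the two powers reaches $q^0$, several distinct chains terminate at the same prime-to-$p$ exponent, and one must check that the resulting coefficients (which depend on the constants $a_j$ in the chosen factorization of $\bT$) do not cancel; moreover total ramification over $x=\infty$ must be verified at every intermediate step, not just the bottom one, to obtain irreducibility. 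None of this is carried out, and it is precisely the content of the proof in \cite{hb2}. A secondary, easily repaired point: the criterion $(x^{q^\ell}-x)F_x+(y^{q^\ell}-y)F_y\equiv0\pmod{F}$ for Frobenius nonclassicality presupposes that $F$ is irreducible, so that final part of your argument also sits downstream of the missing irreducibility proof, even though the polynomial identity itself is verified unconditionally.
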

Notice that the number of $\Fql$-rational points of the nonsingular 
model of $\cH$ is also $q^{2\ell-1}+1$ (loc. cit.). If $p>2$ and $\ell=2$, 
then it is clear that $\cH$ is the Hermitian curve and 
thus $\cH(\mathbb F_{q^2})$ is a well known complete $(q^3+1,q+1)$-arc. 
Here we focus on the more complicated case $\ell\geq 3$. 
   \begin{remark} In {\cite{gshermitiana}}, Garcia and Stichtenoth 
considered the plane curve $\cC$ defined by the affine equation 
   $$
y^{q^{\ell-1}}+\dots+y^{q}+y=x^{q^{\ell-1}+q^{\ell-2}}+\dots+x^{q+1}
  $$ 
over $\Fql$ with $\ell\geq 2$; see also {\cite{bul,castle,mst}}. This 
curve has degree $q^{\ell-1}+q^{\ell-2}$, 
genus $q^{\ell-1}(q^{\ell-1}-1)/2$ and $q^{2\ell-1}+1$ $\Fql$-rational 
points in $PG(2,\Fql)$. The nonsingular model of $\cC$ also has  
$q^{2\ell-1}+1$ rational points over $\Fql$ (loc. cit.).

For $\ell=2$ and $p>2$ both plane curves $\cC$ and $\cH$ are the 
Hermitian curve. For $\ell=3$, they define the same curve. For $\ell=4$ 
and $\ell=6$, their degrees, genus and numbers of rational points are 
the same. In general, the 
number of its rational points 
coincide; however, the degree and genus of $\cH$ are smaller than that 
of $\cC$. In particular, the ratios (number of rational points)/degree  
and (number of rational points)/genus are 
better on the curve $\cH$. Such rates are particularly 
important; for example, in the context of Finite Geometry or Coding Theory 
via AG codes; see, for example, {\cite{HKT}}.
   \end{remark}
   
As mentioned in the Introduction, the main goal of this paper is 
the study of the arc $\cK:=\cH(\Fql)$ in 
$PG(2,q^{\ell})$ arising from the set 
of $\Fql$-rational points of the plane curve $\cH$ (see Section 
\ref{s5}). To deal with the parameters of $\cK$, 
the Frobenius nonclassicality property of $\cH$ is not  used. 
In fact, only the degree and the number of $\Fql$-rational points of 
$\cH$ stated in Theorem \ref{herivelto} are used. The approach is the 
natural one: consider $\Fql$-lines 
$\mathcal{L}:\, y+bx+c=0$ and count the number $M_\ell(b,c)$ of 
$\Fql$-rational points of $\cH$ 
lying on $\mathcal{L}$. This 
number is related to the 
degree $d$ of $\cH$ so that $M_\ell(b,c)\leq 
d$. Then $M_\ell(b,c)$ is equal to the number 
of $\Fql$-solutions of the one variable equation
  $$
\bT(x^{q^r+1}+bx+c)=0 
  $$ 
and thus it can be computed by means of the relation 
  \begin{equation}\label{n=qm}
N_\ell(b,c)=qM_\ell(b,c)\, ,
  \end{equation}
where $N_\ell(b,c)$ is the number of $\Fql$-affine points of the 
Artin-Schreier curve of type
  \begin{equation} 
y^q-y=x^{q^r+1}+bx+c\, , \label{AS} 
   \end{equation}
with $r$ defined as in (\ref{r1}). Thus we are led to the problem of 
the computation of rational points on 
curves over finite fields of Artin--Schreier type. Such computations were 
already performed by several authors. For example, in 1989 Wolfmann 
{\cite{wolfmann}} used quadratic forms to calculate the number of 
$\Fql$-affine points of Artin--Schreier curves of type
  $$
y^q-y=ax^s+c\, ,
  $$ 
where $a\in\Fql^*$, $c\in\Fql$, $\ell$ is even and 
$s$ is a certain divisor of $q^\ell-1$. Later on, in 2002, Coulter 
{\cite{c3}} used facts on exponential sums {\cite{c1,c2,c4}} 
to compute the number of $\Fq$-rational points on Artin--Schreier curves 
of type 
    \begin{equation}\label{coulter}
y^{p^n}-y=ax^{p^{\alpha}+1}+L(x)\, , 
     \end{equation}
where $a\in{\Fq}^*$, $t:=\gcd(n,e)$ divides 
$u:=\gcd(\alpha,e)$, with $q=p^e$, 
and $L(x)\in\Fq[x]$ is a $p^t$-linearized 
polynomial. We recall that 
Wolfmann's and Coulter's results have some overlap but they are not 
equivalent.

   \section{The number of rational affine points 
of a class \\
of Artin--Schreier curves}\label{s3}

Throughout this section let $q=p^n$ be a power of 
a prime $p$, and let $\ell$ and $r$ be integers with 
$\ell\geq 2$ and $r\geq 0$. By 
considering the curve (\ref{AS}) and by taking into account 
the type of the curve (\ref{coulter}) studied by Coulter, we are led to 
compute the number of $\Fql$-affine points of Artin--Schreier curves of type 
   $$
y^q-y=ax^{q^r+1}+L(x)+c\, ,
   $$
where $a\in \Fql^*$, $c\in\Fql$ and 
$L(x)=\displaystyle{\sum_{i=0}^{\ell-1}b_ix^{q^i}}\in \Fql[x]$ is a 
$q$-linearized polynomial. If we set  
$b:=\displaystyle{\sum_{i=0}^{\ell-1}b_i^{q^{\ell-i}}}$, 
arguing as in {\cite[Thm. 5.8]{c3}}, then computing $\Fql$-rational 
affine points of curves as 
above is in fact equivalent to computing 
$\Fql$-affine points of Artin--Schreier curves of type
   \begin{equation}\label{beatriz}
y^q-y=ax^{q^r+1}+bx+c\, ,
   \end{equation}
where $a\in\Fql^*$, $b,c\in\Fql$. This observation is  useful 
in computing  the degree of the arcs in 
Section \ref{s5}.

Let $N_{\ell,r}(a,b,c)$ denote the number of $\Fql$-affine points of 
the curve (\ref{beatriz}). By {\cite[Lemma 5.5]{c3}} we have an 
exponential sum of type
   \begin{equation*}
N_{\ell,r}(a,b,c)=\sum\limits_{h\in\Fq}\sum\limits_{x\in\Fql}
\chi_1(hax^{q^r+1}+ hbx+hc)\, ,
   \end{equation*}
where 
$\chi_1(x)={\rm exp}(2\pi\sqrt{-1}{\bf t}(x)/p)$ 
is the canonical additive character of $\Fql$ with 
${\bf t}:\Fql\to\Fp$ being 
the absolute trace map. For $A,B,C\in\Fql$ we 
consider the following Weil sum on $\Fql$:
   $$
R_{\ell,r}(A,B,C):=\sum\limits_{x\in\Fql}\chi_1(Ax^{q^r+1}+Bx+C)\, .
   $$
Thus 

   \begin{equation}\label{N}
N_{\ell,r}(a,b,c)=\sum\limits_{h\in\Fq}R_{\ell,r}(ha,hb,hc)\, .
   \end{equation}
	
It turns out that $R_{\ell,r}(A,B,C)=R_{\ell,r}(A,B,0)\chi_1(C)$, 
where the sum $R_{\ell,r}(A,B,0)$ was computed by 
Coulter {\cite{c1,c2,c4}}. Its computation depends on 
properties of certain polynomials over $\Fql$ such 
as those in Remark \ref{solvable} below 
(see also Remark \ref{solvable2}).
   \begin{remark}\label{solvable} Let $\ell$ and 
$r$ be integers with $\ell\geq 2$ and $r\geq 0$. Suppose that 
$\ell/u$ is odd, where $u=\gcd(\ell,r)$. Let $p$ 
be the characteristic of $\Fql$. Let ${\bT}_u:\Fql\to {\mathbb 
F}_{q^u}$ be the trace map.
   \begin{enumerate}
\item[(1)] If $p=2$, then $\gcd(q^r+1,q^\ell-1)=1$ {\cite[Lemma 2.1]{c4}} 
and thus 
$x^{q^r+1}$ is a permutation polynomial over $\Fql$ and hence over 
$\Fq$. Moreover, for $b\in\Fql$ with ${\bT}_u(b)=1$, the 
equation $x^{q^{2r}}+x+1=b$ has a solution in 
$\Fql$; see the remark after the proof of Theorem 4.2 in {\cite{c4}}.
\item[(2)]If $p>2$, then $f(x)=a^{q^r}x^{q^{2r}}+ax$ with 
$a\in\Fql^*$ is also a permutation polynomial over $\Fql$; 
see the remark after Lemma 2.2 in {\cite{c2}}.
  \end{enumerate}
   \end{remark}
From  Theorems 4.4, 4.5, 4.6 and 4.7 in {\cite{c3}} 
we can compute now the 
sum $R_{\ell,r}(a,b,c)$ as follows.
   \begin{lemma}\label{r2} Let $q=p^n$ be a power of 
a prime $p.$ Let $\ell$ and $r$ be integers with 
$\ell\geq 2$ and $r\geq 0$. Set $u=\gcd(\ell,r).$ Let $a,b,c\in\Fql$, 
$a\neq 0.$ Let $\eta_1$ be the 
quadratic character of $\Fql$ and $\chi_1$ be the canonical additive 
character of $\Fql.$ 
Let $f(x)=a^{q^r}x^{q^{2r}}+ax$ (cf. 
Remark {\rm\ref{solvable}(2)} above). 

{\rm(1)} Let $\ell/u$ be odd. Then
    $$ 
R_{\ell,r}(a,0,c)=
   \begin{cases}
0, &\text{ if }\quad p=2\\
(-1)^{n\ell-1}\, q^{\ell/2}\, \eta_1(a)\ \chi_1(c),&\text{ if }\quad 
p\equiv 1\pmod{4}\\
(-1)^{n\ell-1}\, 
(-1)^{n\ell/2}\, q^{\ell/2}\, \eta_1(a)\, \chi_1(c),&\text{ 
if }\quad p\equiv 3\pmod{4}\, ,
   \end{cases}
    $$
For $b\neq 0,$ the following cases arise.
   \begin{enumerate}
\item[\rm(i)] If $p=2$ and $h\in\Fql,$ then 
  $$
R_{\ell,r}(ah,bh,ch)=R_{\ell,r}(h,b{a_1}^{-1}h,ch)\, ,
  $$
where $a_1\in\Fql^*$ is the solution of $x^{q^r+1}=a.$ 
Moreover$,$ let ${\bT}_u:\Fql\to{\mathbb F}_{q^u}$ be the trace 
map$.$ Then $R_{\ell,r}(1,b,c)=0$ provided that ${\bT}_u(b)\neq 1;$ 
otherwise$,$ there is $w\in\Fql$ such that $b=w^{q^{2r}}+w+1$ and 
  $$
R_{\ell,r}(1,b,c)=\chi_1(w^{q^r+1}+w)\left(\frac{2}{\ell/u}\right)^{nu}
q^{(\ell+u)/2}\chi_1(c)\, ,
   $$ 
where the Jacobi symbol $(\frac{2}{v})$ is defined by the formula
  \begin{equation*}
\left(\frac{2}{v}\right)=
    \begin{cases}
1\, , & \text{ if }\quad v\equiv\pm1\pmod{8}\\
-1\, , & \text{ if }\quad v\equiv\pm3\pmod{8}\, .\\
   \end{cases}
    \end{equation*} 
\item[\rm(ii)] If $p>2,$ then $R_{\ell,r}(a,b,c)$ is given by

	$$
  \begin{cases}
(-1)^{n\ell-1}\, q^{\ell/2}\, \eta_1(-a)\, 
\overline{\chi_1(ax_0^{q^r+1})}\, \chi_1(c)\, , &\text{if\quad 
$p\equiv1\pmod{4}$}\\ 
(-1)^{n\ell-1}i^{3n\ell}\, q^{\ell/2}\, \eta_1(-a)\, 
\overline{\chi_1(ax_0^{q^r+1})}\chi_1(c)\, , &\text{if\quad $p\equiv 
3\pmod{4}$}\, ,\\
  \end{cases}
   $$
where $x_0\in\Fql$ is the solution of $f(x)=-b^{q^r}$ and $i=\sqrt{-1}$.
   \end{enumerate}
{\rm(2)} Let $\ell/u$ be even$.$
   \begin{enumerate}
\item[\rm(i)] If $f(x)=-b^{q^r}$ has no solution in $\Fql,$ 
then $R_{\ell,r}(a,b,c)=0.$
\item[\rm(ii)] If $f(x)$ is a permutation polynomial over $\Fql$ and 
$x_0\in \Fql$ is the solution of $f(x)=-b^{q^r},$ then 
  $$
R_{\ell,r}(a,b,c)=(-1)^{\ell/2u}q^{\ell/2}\, 
\overline{\chi_1(ax_0^{q^r+1})}\, \chi_1(c)\, .
  $$
\item[\rm(iii)] If $f(x)$ is not a permutation polynomial but 
$f(x)=-b^{q^r}$ has a solution $x_0$ in $\Fql,$ then
  $$
R_{\ell,r}(a,b,c)=(-1)^{\ell/2u+1}q^{\ell/2+u}\, 
\overline{\chi_1(ax_0^{q^r+1})}\, \chi_1(c)\, .
  $$
   \end{enumerate}
   \end{lemma}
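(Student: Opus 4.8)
The plan is to reduce the computation of the full Weil sum $R_{\ell,r}(a,b,c)$ to the already-known case $R_{\ell,r}(A,B,0)$ computed by Coulter in \cite{c1,c2,c4}, and then translate Coulter's four theorems (Theorems 4.4--4.7 in \cite{c3}) into the explicit case-by-case formulas stated in the lemma. The key structural observation, as flagged in the text, is the factorization
\[
R_{\ell,r}(A,B,C)=R_{\ell,r}(A,B,0)\,\chi_1(C),
\]
which follows immediately from the definition: since $\chi_1$ is additive, $\chi_1(Ax^{q^r+1}+Bx+C)=\chi_1(Ax^{q^r+1}+Bx)\,\chi_1(C)$, and $\chi_1(C)$ is a constant that factors out of the sum over $x\in\Fql$. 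Thus the entire $C$-dependence is the scalar factor $\chi_1(c)$ appearing in every formula, and it suffices to handle $C=0$.

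First I would treat part (1), the case where $\ell/u$ is odd. For $b=0$, I would directly invoke Coulter's evaluation of the pure Gauss-type sum $\sum_{x}\chi_1(ax^{q^r+1})$, splitting into the three subcases $p=2$, $p\equiv1\pmod4$, and $p\equiv3\pmod4$; the vanishing when $p=2$ reflects that $x^{q^r+1}$ is then a permutation polynomial (Remark \ref{solvable}(1)), so the sum is $\sum_{x}\chi_1(ax)=0$. For $b\neq0$ and $p=2$, I would use the permutation property to perform the substitution $x\mapsto a_1^{-1}x$ with $a_1^{q^r+1}=a$, yielding the stated reduction $R_{\ell,r}(ah,bh,ch)=R_{\ell,r}(h,ba_1^{-1}h,ch)$, and then apply Coulter's Theorem 4.2 in \cite{c4} (with its trace condition $\bT_u(b)=1$ and the auxiliary solution $w$ of $x^{q^{2r}}+x+1=b$) to produce the Jacobi-symbol formula. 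For $b\neq0$ and $p>2$, I would complete the square: since $f(x)=a^{q^r}x^{q^{2r}}+ax$ is a permutation polynomial (Remark \ref{solvable}(2)), the linear shift determined by the solution $x_0$ of $f(x)=-b^{q^r}$ absorbs the linear term $Bx$ into the quadratic, reducing to the $b=0$ evaluation up to the factor $\overline{\chi_1(ax_0^{q^r+1})}$, which accounts for the completed-square constant.

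Next I would treat part (2), where $\ell/u$ is even and $f$ need not be a permutation polynomial. Here the trichotomy is governed by solvability and bijectivity of $f$: if $f(x)=-b^{q^r}$ has no solution the sum vanishes (Coulter's Theorem 4.6); if $f$ is a permutation polynomial the unique $x_0$ gives the clean $(-1)^{\ell/2u}q^{\ell/2}$ formula; and if $f$ fails to be a permutation but a solution $x_0$ still exists, the kernel of $f$ contributes an extra factor $q^u$ and a sign change, yielding $(-1)^{\ell/2u+1}q^{\ell/2+u}$. The main obstacle, and really the only nonroutine part, is the bookkeeping of the constant factors and signs: matching Coulter's normalizations (the powers of $i=\sqrt{-1}$, the quadratic characters $\eta_1(\pm a)$, and the exponents $(-1)^{n\ell-1}$) exactly as they emerge from his Theorems 4.4--4.7, and verifying that the completed-square constant is precisely $\overline{\chi_1(ax_0^{q^r+1})}$ rather than its conjugate or a variant. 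Once the factorization $R_{\ell,r}(A,B,C)=R_{\ell,r}(A,B,0)\chi_1(C)$ is in hand, the remaining work is a careful transcription of the cited results into the present notation, with no new ideas required beyond keeping the sign and character conventions consistent.
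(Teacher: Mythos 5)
Your proposal is correct and follows essentially the same route as the paper: factor out $\chi_1(C)$ by additivity of the canonical character, then transcribe Coulter's Theorems 4.4--4.7 of \cite{c3} (together with the permutation-polynomial facts of Remark \ref{solvable}) into the stated case distinctions. The paper's own justification is precisely this factorization plus citation, so your additional details --- the substitution $x\mapsto a_1^{-1}x$ when $p=2$ and the completing-the-square computation identifying the constant $\overline{\chi_1(ax_0^{q^r+1})}$ via the condition $f(x_0)=-b^{q^r}$ --- merely make explicit what the paper leaves to the reader.
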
 
Theorems \ref{nabcp2}, \ref{nabcp3} and 
\ref{nabcp4} compute 
$N_{\ell,r}(a,b,c)$. We begin with the case 
$p=2$ and $\ell/u$ odd;
the final result is closely related to 
{\cite[Thm. 6.9]{c3}}.
   \begin{theorem}\label{nabcp2} Let
\begin{enumerate}[\upshape (a)]
	\item $q=2^n$ $;$
\item $\ell$ and $r$ be integers with 
$\ell\geq 2$ and $r\geq 0$ such that $\ell/u$ is odd$,$ where 
$u=\gcd(\ell,r);$ 
\item ${\bT}_u:\Fql\to\mathbb F_{q^u}$ be the trace map$;$
\item $a,b,c\in\Fql,$ $a\neq 0;$ 
\item $a_1\in\Fql^*$ be the solution of $x^{q^r+1}=a.$
   \end{enumerate}
Then $N_{\ell,r}(a,b,c)=N_{\ell,r}(1,b{a_1}^{-1},c).$ If 
${\bT}_u(b)\not\in\Fq^*,$ then $N_{\ell,r}(1,b,c)=q^{\ell};$ 
otherwise$,$ 
   $$
N_{\ell,r}(1,b,c)=q^{\ell}+\chi_1(\omega^{q^r+1}+\omega)
\left(\frac{2}{\ell/u}\right)^{nu}q^{(\ell+u)/2}\, 
\chi_1({\bT}_u(b)^{-2}\, c)\, ,
   $$ 
where $\omega\in\Fql$ is such that $b\,{\bT}_u(b)^{-1}=
\omega^{q^{2r}}+\omega+1,$ 
and $(\frac{2}{v})$ is the Jacobi symbol defined above.
  \end{theorem}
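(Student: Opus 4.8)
The plan is to compute $N_{\ell,r}(a,b,c)$ directly from the exponential-sum expression \eqref{N} together with the explicit evaluation of the Weil sum $R_{\ell,r}$ furnished by Lemma \ref{r2}. The first step is the reduction to the normalized case: since $p=2$ and $\ell/u$ is odd, Remark \ref{solvable}(1) guarantees that $x^{q^r+1}$ is a permutation polynomial of $\Fql$, so there is a unique $a_1\in\Fql^*$ with $a_1^{q^r+1}=a$. Substituting $x\mapsto a_1^{-1}x$ in the defining equation of the curve \eqref{beatriz} (or, equivalently, applying the identity $R_{\ell,r}(ah,bh,ch)=R_{\ell,r}(h,ba_1^{-1}h,ch)$ from Lemma \ref{r2}(1)(i) term by term in the sum over $h\in\Fq$) yields $N_{\ell,r}(a,b,c)=N_{\ell,r}(1,ba_1^{-1},c)$. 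Thus it suffices to handle $a=1$.

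Next I would expand $N_{\ell,r}(1,b,c)=\sum_{h\in\Fq}R_{\ell,r}(h,bh,ch)$ and split off the $h=0$ term, which contributes $R_{\ell,r}(0,0,0)=\#\Fql=q^\ell$; this is the source of the leading term. For $h\in\Fq^*$, I again use the scaling identity to write $R_{\ell,r}(h,bh,ch)=R_{\ell,r}(1,b(h_1)^{-1}h,ch)$, where $h_1$ is the $(q^r+1)$-th root of $h$; but since $h\in\Fq^*$ and $\gcd(q^r+1,q-1)$ behaves well, in fact $h^{q^r+1}=h^2$ for $h\in\Fq$ in characteristic $2$ is not quite what is needed — the cleaner route is to observe that for $h\in\Fq^*$ one has $h_1=h^{s}$ for the appropriate exponent inverting $q^r+1$ modulo $q^\ell-1$, and then $R_{\ell,r}(1,bh^{1-s}\cdot(\text{unit}),ch)$. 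The key input is part (ii) of the case distinction is not the one we want — here $p=2$, so we invoke Lemma \ref{r2}(1)(i): $R_{\ell,r}(1,\beta,\gamma)=0$ unless ${\bT}_u(\beta)=1$, in which case, writing $\beta=w^{q^{2r}}+w+1$, it equals $\chi_1(w^{q^r+1}+w)\bigl(\tfrac{2}{\ell/u}\bigr)^{nu}q^{(\ell+u)/2}\chi_1(\gamma)$. So I must determine, for each $h\in\Fq^*$, whether the $b$-coefficient entering the normalized sum has ${\bT}_u$-value equal to $1$, and if so track both the $w$ producing it and the additive-character factor $\chi_1(\gamma)$ with $\gamma=ch$.

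The main obstacle — and the heart of the computation — is the bookkeeping of the trace condition across the sum over $h\in\Fq^*$. Concretely, the normalized $b$-coefficient is a scalar multiple of $b$ by an element depending on $h$; since ${\bT}_u$ is $\mathbb F_{q^u}$-linear and $h\in\Fq^*\subseteq\mathbb F_{q^u}$, one gets ${\bT}_u(\text{coefficient})=h\cdot c_0\,{\bT}_u(b)$ for a fixed constant $c_0$ coming from the root-extraction exponent, so the condition "${\bT}_u=1$" selects exactly one value $h=h^*$ provided ${\bT}_u(b)\in\Fq^*$ (and no value when ${\bT}_u(b)\notin\Fq^*$, giving $N_{\ell,r}(1,b,c)=q^\ell$). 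For that unique $h^*$ one reads off $h^*={\bT}_u(b)^{-1}c_0^{-1}$; a short computation with the exponent $s$ identifies $c_0$ and shows $h^*={\bT}_u(b)^{-2}$ after accounting for the $(q^r+1)$-scaling (the square arising because inverting $q^r+1$ interacts with the trace through the Frobenius), and simultaneously pins down $\omega\in\Fql$ via $b\,{\bT}_u(b)^{-1}=\omega^{q^{2r}}+\omega+1$ so that the surviving $w$ equals $\omega$. Substituting $\gamma=ch^*=c\,{\bT}_u(b)^{-2}$ and collecting the factor $\chi_1(\omega^{q^r+1}+\omega)\bigl(\tfrac{2}{\ell/u}\bigr)^{nu}q^{(\ell+u)/2}$ then gives exactly the stated formula. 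I expect the delicate points to be (i) justifying that the normalization constant $c_0$ is such that $h^*={\bT}_u(b)^{-2}$ rather than ${\bT}_u(b)^{-1}$, which requires care with how the permutation $x\mapsto x^{q^r+1}$ composes with the linear map $x\mapsto x^{q^{2r}}+x+1$, and (ii) checking that the existence of $\omega$ is guaranteed precisely by the last sentence of Remark \ref{solvable}(1), i.e. that ${\bT}_u(b\,{\bT}_u(b)^{-1})=1$, which is immediate from ${\bT}_u$-linearity and ${\bT}_u(b)\in\Fq^*$.
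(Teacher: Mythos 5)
Your proposal is correct and follows essentially the same route as the paper: expand $N_{\ell,r}$ via (\ref{N}), split off the $h=0$ term contributing $q^\ell$, normalize $R_{\ell,r}(h,hb,hc)=R_{\ell,r}(1,h_1b,h_1^2c)$ with $h_1^{q^r+1}=h$ (so $h_1^2=h$ for $h\in\Fq^*$), and apply Lemma \ref{r2}(1)(i), the trace condition singling out $h_1={\bT}_u(b)^{-1}$, i.e.\ $h=h_1^2={\bT}_u(b)^{-2}$. The ``normalization constant'' you worried about is simply that the new $b$-coefficient is linear in $h_1=\sqrt{h}$ rather than in $h$, which is exactly the source of the square you flagged and confirms your identification $h^{*}={\bT}_u(b)^{-2}$.
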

  \begin{proof} We use Lemma \ref{r2}(1)(i). The first part is clear 
from (\ref{N}). Write 
   $$
N_{\ell,r}(1,b,c)=q^\ell+\sum_{h\in{\Fq}^*}R_{\ell,r}(h,hb,hc)\, .
  $$
For each $h\in {\Fq}^*$, $R_{\ell,r}(h,hb,hc)=
R_{\ell,r}(1, h_1b,h_1^2c)$ with 
$h_1\in{\Fq}^*$ such that ${h_1}^{q^r+1}=h$. If 
${\bT}_u(b)\not\in\Fq^*$, 
then ${\bT}_u(h_1b)\neq 1$ and hence $R(h,hb,h,c)=0$ so that 
$N_{\ell,r}(1,b,c)=q^\ell$. Let ${\bT}_u(b)\in{\Fq}^*$; then 
${\bT}_u(h_1b)=1$ if and only if 
$h_1={\bT}_u(b)^{-1}$ so that
  $$
N_{\ell,r}(1,b,c)=q^\ell+R_{\ell,r}(1,h_1b,h_1^2c)
  $$
and the result follows.
    \end{proof}
We recall next some results regarding Gaussian sums over finite fields.
   \begin{lemma}\label{jung} Let $\Fq$ be the 
finite field of 
order $q=p^n$ with $p$ a prime. Let $\eta$ be 
the quadratic character of $\Fq$ 
and let $\chi$ be the canonical additive character of $\Fq.$ For $F\in 
\Fq,$ let $\chi^F(h):=\chi(Fh),$ $h\in\Fq.$ 

\begin{enumerate}[\upshape (i)]
\item Set $\displaystyle{G(\eta,\chi^F):=\sum_{h\in\Fq^*}\eta(h)\chi^F(h)}$, then

$$G(\eta,\chi^F)=\begin{cases}
   0, & \text{if $F=0$}\\
   (-1)^{n-1}q^{1/2}\, \eta(F), & \text{if $F\neq0$, $p\equiv 
1\pmod{4}$}\\
   (-1)^{n-1}(-1)^{n/2}q^{1/2}\, \eta(F), & \text{if $F\neq0$, $p\equiv 
3\pmod{4}$.}
     \end{cases}
    $$
\item 
    $$
   G(1,\chi^F):=\sum_{h\in\Fq^*}\chi^F(h)=\begin{cases}
q-1,&\text{if $F=0$}\\
-1,&\text{if $F\neq 0$.}
  \end{cases}
   $$
	\end{enumerate}
   \end{lemma}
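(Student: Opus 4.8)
The plan is to peel off the elementary pieces and reduce the whole statement to the classical evaluation of the quadratic Gaussian sum over the prime field, then lift by Davenport--Hasse. Part (ii) is immediate: if $F=0$ then $\chi^F$ is the trivial character on $\Fq$ and $\sum_{h\in\Fq^*}\chi^F(h)=q-1$; if $F\neq 0$ then $\chi^F$ is a nontrivial additive character, so $\sum_{h\in\Fq}\chi^F(h)=0$ by orthogonality and hence $\sum_{h\in\Fq^*}\chi^F(h)=-\chi^F(0)=-1$. Likewise, in part (i) the case $F=0$ is trivial because $\eta$ is a nontrivial multiplicative character of $\Fq^*$, so $G(\eta,\chi^0)=\sum_{h\in\Fq^*}\eta(h)=0$.

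For part (i) with $F\neq 0$, I would first record the homogeneity of the sum in $F$. Substituting $h\mapsto F^{-1}h$ and using that $\eta$ takes values in $\{\pm1\}$ (so $\eta(F^{-1})=\eta(F)$), one obtains
$$G(\eta,\chi^F)=\sum_{h\in\Fq^*}\eta(F^{-1}h)\,\chi(h)=\eta(F)\sum_{h\in\Fq^*}\eta(h)\,\chi(h)=\eta(F)\,G(\eta,\chi).$$
Thus everything comes down to evaluating the quadratic Gaussian sum $G(\eta,\chi)=\sum_{h\in\Fq^*}\eta(h)\chi(h)$.

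To finish, write $q=p^n$ and let $\eta_0$, $\chi_0$ be the quadratic character and canonical additive character of $\Fp$, so that $\eta$ is the lift of $\eta_0$ along the norm $\Fq\to\Fp$ and $\chi$ the lift of $\chi_0$ along the trace $\Fq\to\Fp$. Gauss's classical sign determination gives $G(\eta_0,\chi_0)=p^{1/2}$ if $p\equiv1\pmod4$ and $G(\eta_0,\chi_0)=i\,p^{1/2}$ if $p\equiv3\pmod4$, with $i=\sqrt{-1}$; the Davenport--Hasse relation then yields $G(\eta,\chi)=(-1)^{n-1}G(\eta_0,\chi_0)^n$, i.e. $(-1)^{n-1}q^{1/2}$ when $p\equiv1\pmod4$ and $(-1)^{n-1}i^{\,n}q^{1/2}$ when $p\equiv3\pmod4$ (the latter equal to $(-1)^{n-1}(-1)^{n/2}q^{1/2}$ for $n$ even). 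Combining with the previous step, $G(\eta,\chi^F)=\eta(F)G(\eta,\chi)$ for $F\neq0$, which is exactly the asserted formula.

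The only non-routine input is this last paragraph, and it is really where the content sits: Gauss's theorem on the sign of the quadratic Gauss sum over $\Fp$ together with the Davenport--Hasse lifting relation. Both are entirely classical and are, in fact, already built into the exponential-sum machinery of Coulter used throughout (see \cite{c1,c2,c4}); so I would simply invoke them rather than reprove them, and treat the rest of the lemma as the bookkeeping above.
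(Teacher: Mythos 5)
Your proof is correct and follows essentially the same route as the paper, which simply cites Lidl--Niederreiter for these facts: the orthogonality/triviality arguments for part (ii) and the $F=0$ case, the shift $G(\eta,\chi^F)=\eta(F)G(\eta,\chi)$ (their Theorem 5.12(i)), and the explicit evaluation of $G(\eta,\chi)$ over $\Fq$ (their Theorem 5.15, whose proof is precisely the Gauss sign determination plus Davenport--Hasse that you invoke). Your parenthetical observation that $(-1)^{n/2}$ should be read as $i^n$ when $n$ is odd is a fair point about the statement, not a gap in your argument.
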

  \begin{proof} (i) If $F=0$, see {\cite[Thm. 5.4]{nl}}. If $F\neq 0$, the 
result follows from Theorems 5.12(i), 5.15 in {\cite{nl}}.

(ii) It follows from relation (5.8) in {\cite[p. 192]{nl}}.
   \end{proof}
	
The following result is closely related to 
{\cite[Thm. 6.10]{c3}}.
   \begin{theorem}\label{nabcp3} Let
\begin{enumerate}[\upshape (a)]
\item $q=p^n$ be a power of a prime $p>2;$ 
\item $\ell$ and $r$ be integers with 
$\ell\geq 2$ and $r\geq 0$ such that $\ell/u$ is odd$,$ where 
$u=\gcd(\ell,r);$ 
\item $a,b,c\in \Fql,\ a\neq 0;$ 
\item ${\bT}:\Fql\to\Fq$ be the trace map$;$ 
\item $\eta_1$ be the quadratic character of $\Fql;$ 
\item $f(x)=a^{q^r}x^{q^{2r}}+ax$ and $x_0\in\Fql$ be the 
solution of $f(x)=-b^{q^r};$  
\item $c_1=ax_0^{q^r+1}-c.$
    \end{enumerate}

There are two cases:

\begin{enumerate}[\upshape (1)]
\item Let $\ell$ be odd. If ${\bT}(c_1)=0,$ then 
$N_{\ell,r}(a,b,c)=q^\ell;$ otherwise, $N_{\ell,r}(a,b,c)$ is given by
    \begin{equation*}
q^\ell+
   \begin{cases}
q^{(\ell+1)/2}\, \eta_1(a\, {\bT}(c_1)), &\text{if $p\equiv 
1\pmod{4}$}\\
(-1)^{n(\ell+1)/2}\, q^{(\ell+1)/2}\, \eta_1(a\, {\bT}(c_1)), 
&\text{if $p\equiv 3\pmod{4}$,\, $b=0$} \\
(-1)^{n(3\ell+1)/2}\, q^{(\ell+1)/2}\, \eta_1(a\, {\bT}(c_1)), 
&\text{if $p\equiv 3\pmod{4}$,\, $b\neq 0$}\, .
   \end{cases}
  \end{equation*}
\item Let $\ell$ be even$.$ Then $N_{\ell,r}(a,b,c)$ is given by
   \begin{equation*}
 q^\ell+
   \begin{cases}
(-1)q^{\ell/2}(q-1)\, \eta_1(a), &\text{if $p\equiv 1\pmod{4}$,\,  
${\bT}(c_1)=0$}\\
q^{\ell/2}\, \eta_1(a), &\text{if $p\equiv 1\pmod{4}$,\,  
${\bT}(c_1)\neq 0$}\\
(-1)^{1+n\ell/2}q^{\ell/2}(q-1)\, \eta_1(a), &\text{if  
$p\equiv3\pmod{4}$,\,  ${\bT}(c_1)=0$}\\
(-1)^{n\ell/2}q^{\ell/2}\, \eta_1(a), &\text{if  
$p\equiv3\pmod{4}$,\, ${\bT}(c_1)\neq0$\, .}
   \end{cases}
   \end{equation*}
	\end{enumerate}
    \end{theorem}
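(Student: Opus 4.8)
The plan is to start from the exponential-sum expression
\begin{equation*}
N_{\ell,r}(a,b,c)=\sum_{h\in\Fq}R_{\ell,r}(ha,hb,hc)
=q^\ell+\sum_{h\in\Fq^*}R_{\ell,r}(ha,hb,hc),
\end{equation*}
exactly as in the proof of Theorem \ref{nabcp2}, isolating the $h=0$ term which contributes $R_{\ell,r}(0,0,0)=q^\ell$. Since $p>2$ and $\ell/u$ is odd, the relevant values of $R_{\ell,r}(ha,hb,hc)$ are supplied by Lemma \ref{r2}(1): the case $b=0$ uses the first displayed formula for $R_{\ell,r}(a,0,c)$, and the case $b\neq 0$ uses part (ii). The first thing I would check is how the auxiliary data transform under $h$: if $x_0$ solves $f(x)=a^{q^r}x^{q^{2r}}+ax=-b^{q^r}$, then $x_0$ also solves $(ha)^{q^r}x^{q^{2r}}+(ha)x=-(hb)^{q^r}$ because $h\in\Fq$ is fixed by the $q$-power Frobenius, so the same $x_0$ works for every $h$; consequently $ha\,x_0^{q^r+1}=h\,ax_0^{q^r+1}$ and the argument of the additive character becomes $h\,c_1$ with $c_1=ax_0^{q^r+1}-c$, using $\overline{\chi_1(hax_0^{q^r+1})}\chi_1(hc)=\chi_1(-h c_1)=\chi_1(h c_1)$ after absorbing the sign into the eventual sum over $h\in\Fq^*$ (or more carefully, tracking that $\chi_1$ restricted to $\Fq$ factors through the absolute trace, so $\chi_1(hc_1)=\chi(\, \mathrm{something}\cdot h\,)$ for a canonical additive character $\chi$ of $\Fq$). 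Likewise $\eta_1(-ha)=\eta_1(h)\eta_1(-a)$ since $\Fq^*\subseteq\Fql^*$ and $\eta_1$ is a character; and the factors $(-1)^{n\ell-1}$, $i^{3n\ell}$, $q^{\ell/2}$ are constants independent of $h$.

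With these substitutions, each case collapses to a sum over $h\in\Fq^*$ of the shape $q^{\ell/2}\cdot(\text{constant})\cdot\eta_1(h)^{\epsilon}\chi_1(h c_1)$, where $\epsilon=1$ (the quadratic-character case, coming from $\eta_1(-a)$ or $\eta_1(a)$) or $\epsilon=0$ in subcases where the $\eta_1$ is absent — but in fact under the hypothesis $p>2$, $\ell/u$ odd the formulas in Lemma \ref{r2}(1) always carry a factor $\eta_1(a)$ or $\eta_1(-a)$, so $\epsilon=1$ throughout. The key point is then to recognize $\sum_{h\in\Fq^*}\eta_1(h)\chi_1(hc_1)$ as a Gaussian sum over $\Fq$: since $h\in\Fq$, the restriction $\eta_1|_{\Fq^*}$ equals $\eta^{\ell}$ where $\eta$ is the quadratic character of $\Fq$ (because the norm from $\Fql$ to $\Fq$ raised appropriately, or simply because a square in $\Fql$ intersected with $\Fq$... more precisely $\eta_1(h)=\eta(h)^{\ell}$ for $h\in\Fq^*$, which is $\eta(h)$ when $\ell$ is odd and $\equiv 1$ when $\ell$ is even), and similarly $\chi_1|_{\Fq}=\chi^{F}$ for an explicit $F\in\Fq$ determined by $\mathbf t=\mathbf t_{\Fq/\Fp}\circ\bT$. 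Then Lemma \ref{jung} evaluates the inner sum: in the $\ell$ odd case it is $G(\eta,\chi^{F})$, giving $0$ when $\bT(c_1)=0$ and $(-1)^{n-1}q^{1/2}\eta(\cdots)$ or $(-1)^{n-1}(-1)^{n/2}q^{1/2}\eta(\cdots)$ otherwise; in the $\ell$ even case $\eta_1(h)\equiv 1$ and the inner sum is $G(1,\chi^F)$, equal to $q-1$ when $\bT(c_1)=0$ and $-1$ otherwise.

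From here the proof is bookkeeping: multiply the Gaussian-sum value by the case-dependent constant $q^{\ell/2}\eta_1(a)$ (times $(-1)^{n\ell-1}$, and times $i^{3n\ell}$ or $(-1)^{n(\ell+1)/2}$ as appropriate), combine the powers of $q$ to get $q^{(\ell+1)/2}$ in the odd case and $q^{\ell/2}$ (or $q^{\ell/2}(q-1)$) in the even case, and simplify the sign. For instance in the $\ell$ odd, $p\equiv 1\pmod 4$, $b\neq 0$ subcase one gets $q^\ell + (-1)^{n\ell-1}q^{\ell/2}\eta_1(a)\cdot(-1)^{n-1}q^{1/2}\eta(\bT(c_1))$, and since $\ell$ is odd $(-1)^{n\ell-1}(-1)^{n-1}=(-1)^{n(\ell-1)}=1$, and $\eta_1(a)\eta(\bT(c_1))=\eta_1(a\bT(c_1))$ after identifying characters, yielding $q^\ell+q^{(\ell+1)/2}\eta_1(a\bT(c_1))$ as claimed; the $b=0$ subcase is the same computation starting from the first display of Lemma \ref{r2}(1) rather than part (ii), which is why its sign $(-1)^{n(\ell+1)/2}$ in the $p\equiv 3$ row differs from the $b\neq 0$ sign $(-1)^{n(3\ell+1)/2}$ — the discrepancy is exactly the factor $i^{3n\ell}$ versus $(-1)^{n\ell/2}$ arising from $R_{\ell,r}(a,0,c)$ versus $R_{\ell,r}(a,b,c)$ with $b\neq 0$. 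I expect the main obstacle to be precisely this sign-tracking in characteristic $p\equiv 3\pmod 4$: one must correctly expand $i^{3n\ell}$ in terms of $(-1)$-powers using that $\ell$ is odd (so $i^{3n\ell}=i^{3n}\cdot i^{3n(\ell-1)}=i^{3n}(-1)^{3n(\ell-1)/2}$, and $i^{3n}$ interacts with the $(-1)^{(n/2)}$-type factor from the Gaussian sum only when $n$ is even), and reconcile it with the factor $(-1)^{n-1}(-1)^{n/2}$ from $G(\eta,\chi^F)$; getting every parity right across the four sub-subcases is the delicate part, whereas the structural reduction to Lemma \ref{jung} is routine. The $\ell$ even case is genuinely easier since $\eta_1$ disappears on $\Fq^*$ and only $G(1,\chi^F)\in\{q-1,-1\}$ enters, immediately producing the $q^{\ell/2}(q-1)$ versus $q^{\ell/2}$ dichotomy in the statement.
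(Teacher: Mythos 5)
Your proposal follows the paper's own proof essentially verbatim: both start from the decomposition $N_{\ell,r}(a,b,c)=q^\ell+\sum_{h\in\Fq^*}R_{\ell,r}(ha,hb,hc)$ coming from (\ref{N}), substitute the values of $R_{\ell,r}$ from Lemma \ref{r2}(1), and recognize the resulting sum over $h$ as the Gaussian sum $G(\eta,\chi^F)$ (respectively $G(1,\chi^F)$) of Lemma \ref{jung} with $F=\bT(c_1)$. Your additional observations --- that the same $x_0$ serves for every $h\in\Fq^*$, that the character argument collapses to $hc_1$, and that $\eta_1|_{\Fq^*}$ equals $\eta$ or is trivial according to the parity of $\ell$ --- are exactly the steps the paper leaves implicit, so the proposal is correct and takes the same route (the only slip, writing $(-1)^{n(\ell-1)}$ where $(-1)^{n(\ell+1)}$ is meant, is harmless since both equal $1$ for $\ell$ odd).
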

    \begin{proof} Let $\eta$ be the quadratic 
character of $\Fq$ and 
$\chi$ be the canonical additive character of $\Fq$. By the transitivity 
property of trace maps, $\chi_1=\chi\circ \bT$. Concerning 
quadratic characters of $\Fql$ and $\Fq$, for $h\in \Fq^*$ we have 
$\eta_1(h)=\eta(h)$ if $\ell$ is odd; otherwise $\eta_1(h)=1$.

Let $\ell$ be odd. Let $p\equiv 1\pmod{4}$. Then, from (\ref{N}) and Lemma
\ref{r2}(1), $$ N_{\ell,r}(a,0,c)=q^\ell+(-1)^{n\ell-1}q^{\ell/2}\eta_1(a)
G(\eta,\chi^F)\, , $$ where $G(\eta,\chi^F)$ is the Gaussian sum in Lemma
\ref{jung} with $F={\bT(c_1)}$. Now the result follows. The case
$p\equiv 3\pmod{4}$ is similar.

Let $\ell$ be even. We use the Gaussian sum $G(1,\chi^F)$ in Lemma 
\ref{jung} and the result follows.
    \end{proof}
The following result is close to {\cite[Thm. 7.11]{c3}}.
    \begin{theorem}\label{nabcp4} Let 
\begin{enumerate}[\upshape (a)]
\item $q=p^n$ be the power of a prime $p;$ 
\item $\ell$ and $r$ be integers with $\ell\geq 2$ and $r\geq 0$ 
such that $\ell/u$ is even with $u=\gcd(\ell,r);$
\item ${\bT}:\Fql\to\Fq$ be the trace map$;$ 
\item $a,b,c\in\Fql$, $a\neq 0;$ 
\item $f(x)=a^{q^r}x^{q^{2r}}+ax.$ 
  \end{enumerate}
Suppose that $f(x)=-b^{q^r}$ has no roots in $\Fql.$ Then 
$N_{\ell,r}(a,b,c)=
q^{\ell};$ otherwise$,$ let 
$x_0\in\Fql$ be a root of $f(x)=-b^{q^r}.$ Set $c_1=ax_0^{q^{r}+1}-c.$ 

\begin{enumerate}[\upshape (1)]
\item If $f(x)$ is a permutation polynomial over $\Fql,$ then
   \begin{equation*}
N_{\ell,r}(a,b,c)=q^\ell+
  \begin{cases}
(-1)^{\ell/2u}q^{\ell/2}(q-1), & \text{if $\bT(c_1)=0$}\\
(-1)^{\ell/2u+1}q^{\ell/2}, & \text{if $\bT(c_1)\neq0$}\, .
    \end{cases}
   \end{equation*}
\item If $f(x)$ is not a permutation polynomial, then
  \begin{equation*}
N_{\ell,r}(a,b,c)=q^{\ell}+
  \begin{cases}
(-1)^{\ell/2u+1}q^{\ell/2+u}(q-1), & \text{ if  $\bT(c_1)=0$}\\
(-1)^{\ell/2u}q^{\ell/2+u}, & \text{ if $\bT(c_1)\neq 0$}\, .
  \end{cases}
  \end{equation*}
	\end{enumerate}
  \end{theorem}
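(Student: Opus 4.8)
The plan is to compute $N_{\ell,r}(a,b,c)$ via the decomposition (\ref{N}), that is, $N_{\ell,r}(a,b,c)=\sum_{h\in\Fq}R_{\ell,r}(ha,hb,hc)$, separating the term $h=0$, which contributes $R_{\ell,r}(0,0,0)=q^\ell$, from the sum over $h\in\Fq^*$. For each $h\neq 0$ one needs the value of $R_{\ell,r}(ha,hb,hc)$ supplied by Lemma \ref{r2}(2); the case $\ell/u$ even is the relevant one here. First I would observe that $f_h(x):=(ha)^{q^r}x^{q^{2r}}+(ha)x=h^{q^r}f(x)$ (using that $h\in\Fq$, so $h^{q^r}=h^{q^{2r}}=h$ when $r$ is a multiple of the relevant exponent — more precisely $h\in\Fq$ means $h^{q}=h$, so $h^{q^j}=h$ for all $j$), whence $f_h$ is a permutation polynomial if and only if $f$ is, and $f_h(x)=-(hb)^{q^r}$ has a solution in $\Fql$ if and only if $f(x)=-b^{q^r}$ does. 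In particular, when $f(x)=-b^{q^r}$ has no root in $\Fql$, every summand with $h\neq 0$ vanishes by Lemma \ref{r2}(2)(i), giving $N_{\ell,r}(a,b,c)=q^\ell$ as claimed.

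Next, assuming a root $x_0$ of $f(x)=-b^{q^r}$ exists, I would identify the root of $f_h(x)=-(hb)^{q^r}$. Since $f_h(x)=h f(x)$ and $-(hb)^{q^r}=-h b^{q^r}=h\cdot(-b^{q^r})$, the unique relevant root is again $x_0$ (in the permutation case it is literally unique; in the non-permutation case one checks the solvability is preserved and picks the same $x_0$, noting the value $\chi_1\bigl((ha)x_0^{q^r+1}\bigr)$ is what enters). Then Lemma \ref{r2}(2)(ii) or (iii) gives, for $h\in\Fq^*$,
\begin{equation*}
R_{\ell,r}(ha,hb,hc)=\varepsilon\,q^{m}\,\overline{\chi_1\bigl(ha\,x_0^{q^r+1}\bigr)}\,\chi_1(hc)\, ,
\end{equation*}
where $\varepsilon=(-1)^{\ell/2u}$, $m=\ell/2$ in the permutation case and $\varepsilon=(-1)^{\ell/2u+1}$, $m=\ell/2+u$ otherwise. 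Now $\overline{\chi_1(ha\,x_0^{q^r+1})}\,\chi_1(hc)=\chi_1\bigl(h(c-a\,x_0^{q^r+1})\bigr)=\chi_1(-hc_1)$ with $c_1=a\,x_0^{q^r+1}-c$ as in hypothesis (f)/(g). Using $\chi_1=\chi\circ\bT$ (transitivity of the trace) this equals $\chi\bigl(\bT(-hc_1)\bigr)=\chi\bigl(-h\,\bT(c_1)\bigr)=\chi^{-\bT(c_1)}(h)$, since $h\in\Fq$.

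Summing over $h\in\Fq^*$ then produces exactly the Gaussian-type sum $G(1,\chi^{F})=\sum_{h\in\Fq^*}\chi^F(h)$ of Lemma \ref{jung}(ii) with $F=-\bT(c_1)$; this equals $q-1$ if $\bT(c_1)=0$ and $-1$ if $\bT(c_1)\neq 0$. Plugging in, $N_{\ell,r}(a,b,c)=q^\ell+\varepsilon\,q^{m}\,G(1,\chi^{-\bT(c_1)})$, which is $q^\ell+\varepsilon\,q^m(q-1)$ when $\bT(c_1)=0$ and $q^\ell-\varepsilon\,q^m$ when $\bT(c_1)\neq 0$; substituting the two values of $(\varepsilon,m)$ yields precisely the four displayed formulas in parts (1) and (2). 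I expect the only genuinely delicate point to be the bookkeeping around the root $x_0$ in the non-permutation subcase of Lemma \ref{r2}(2)(iii): one must argue that solvability of $f_h(x)=-(hb)^{q^r}$ for all $h\in\Fq^*$ follows from solvability for $h=1$ (immediate from $f_h=hf$) and that the phase $\chi_1(ha\,x_0^{q^r+1})$ is independent of the particular chosen root, so that a single $c_1$ works uniformly across the sum; everything else is a direct application of Lemmas \ref{r2} and \ref{jung} together with the trace transitivity $\chi_1=\chi\circ\bT$.
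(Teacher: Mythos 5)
Your proposal is correct and follows essentially the same route as the paper's own proof: decompose $N_{\ell,r}(a,b,c)$ via (\ref{N}), evaluate each term $R_{\ell,r}(ha,hb,hc)$ for $h\in\Fq^*$ by Lemma \ref{r2}(2), and reduce the sum over $h$ to the Gaussian sum $G(1,\chi^F)$ of Lemma \ref{jung}(ii). You merely supply details the paper leaves implicit (the identity $f_h=hf$ for $h\in\Fq$, hence the transfer of the root $x_0$ and of the permutation property from $f$ to $f_h$, and the phase computation leading to $F=\pm\bT(c_1)$, where the sign is immaterial since $G(1,\chi^F)$ depends only on whether $F$ vanishes).
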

    \begin{proof} The first part follows from (\ref{N}) and Lemma 
\ref{r2}(2)(i). If $f(x)$ is a permutation polynomial over $\Fql$ with 
$x_0$ as above, by (\ref{N}) and Lemma \ref{r2}(2)(ii) we have
  $$
N_{\ell,r}(a,b,c)=q^\ell+(-1)^{\ell/2u}q^{\ell/2}G(1,\chi^F)
  $$
with $F=\bT(c_1)$ and the result follows from Lemma \ref{jung}. If 
$f(x)$ is not a permutation polynomial, 
the proof is similar.
  \end{proof} 

    \section{On maximal Artin--Schreier 
curves}\label{sectionmax}

Let $q=p^n$ be a power of a prime $p$ and let $\ell$ and $r$ be integers 
with $\ell\geq 2$ and $r\geq 0$. Let $a,b,c\in\Fql$, $a\neq 0$. In 
Section \ref{s3} we computed the number $N_{\ell,r}(a,b,c)$ of 
$\Fql$-rational affine points of the Artin--Schreier curve of type 
(\ref{beatriz}), namely
   $$
y^q-y=ax^{q^r+1}+bx+c\, .
   $$
This curve has exactly one singular point which is unibranched; thus the 
number of $\Fql$-rational points of its 
nonsingular model over $\Fql$, denoted by $\cX=\cX_{\ell,r}(a,b,c)$, is 
exactly
  $$
N_{\ell,r}(a,b,c)+1\, .
  $$
The Hasse--Weil bound (see  {\cite[Thm. 
V.2.3]{sti}}, {\cite[Thm. 9.18]{HKT}}) asserts that
  $$
|\#\cX(\Fql)-(q^\ell+1)|\leq 2gq^{\ell/2}\, ,
  $$
where $g$ is the genus of the curve. Here we have $g=q^r(q-1)/2$; see, 
for example, {\cite[Prop. VI.4.1]{sti}}). We are looking for examples of {\em 
$\Fql$-maximal curves} of type 
$\cX_{\ell,r}(a,b,c)$, that is, those  whose number of 
$\Fql$-rational points attains the Hasse--Weil upper bound; equivalently,
 those curves
such that 
   \begin{equation}\label{hw}
N_{\ell,r}(a,b,c)=q^\ell+q^{\ell/2+r}(q-1)\, .
   \end{equation}
It follows then that $q^{\ell/2}$ must be an integer, that is, 
 $n\ell$ must be an even 
integer. See {\cite[Ch. 10]{HKT}} for general results on maximal curves. 

We consider two cases according to the parity of $\ell/u$ with
$u:=\gcd(\ell,r)$.

{\bf Case A:} Suppose that $\ell/u$ is odd. If 
$p=2$, Theorem \ref{nabcp2} does not provide an example where (\ref{hw}) 
holds true. Let $p>2$. Let $f(x)$ and $c_1$ be as in Theorem \ref{nabcp3} 
(cf. Remark \ref{solvable}). If 
$\cX_{\ell,r}(a,b,c)$ is $\Fql$-maximal, Theorem \ref{nabcp3} implies 
that $\ell$ must be 
even, $r=0$ and $\bT(c_1)=0$. Under these 
conditions, the curve 
$\cX_{\ell,0}(a,b,c)$ is 
$\Fql$-maximal if and only if either $p\equiv 1\pmod{4}$ and $a$ is not 
a square in 
$\Fql^*$; or $p\equiv 3\pmod{4}$, $a\in \Fql^*$ is a square 
and $n\ell/2$ is odd; or $p\equiv 3\pmod{4}$, 
$a\in \Fql^*$ is not a 
square and $n\ell/2$ is even.

{\bf Case B:} Suppose that $\ell/u$ is even. Thus 
$r\geq 1$. By Theorem \ref{nabcp4} a 
necessary condition to have (\ref{hw}) is that $f(x)=-b^{q^r}$ has a 
root in $\Fql$ and $\bT(c_1)=0$, where $f(x)$ and $c_1$ are as in Case A 
above. Under these conditions, $\cX_{\ell,r}(a,b,c)$ is 
$\Fql$-maximal if and only if $u=\gcd(\ell,r)=r$ 
  and $\ell/(2u)$ is odd.

We summarize the above computations in the following.
   \begin{theorem}\label{max} Let 
   \begin{enumerate}
\item[\rm(a)] $q=p^n$ be a power of a prime $p;$ 
\item[\rm(b)] $\ell$ and $r$ be integers with $\ell\geq 2$ and $r\geq 
0;$ 
\item[\rm(c)] $\bT:\Fql\to \Fq$ be the trace map$;$ 
\item[\rm(d)] $a,b,c\in\Fql$, $a\neq 0;$
\item[\rm(e)] $f(x)=a^{q^r}x^{q^{2r}}+ax.$ 
   \end{enumerate}
Let $\cX=\cX_{\ell,r}(a,b,c)$ be the 
nonsingular model of the Artin--Schreier 
curve of type {\rm(\ref{beatriz})} over $\Fql.$ If $\cX$ is a 
$\Fql$-maximal curve$,$ then the following conditions must be satisfied$:$
  \begin{enumerate}[\rm(i)]
\item $n\ell$ is even$;$
\item The equation $f(x)=-b^{q^r}$ has a solution $x_0\in\Fql$ 
such that $\bT(c_1)=0,$ where $c_1=a{x_0}^{q^r+1}-c.$
  \end{enumerate}
Conversely$,$ if these conditions are satisfied then $\cX$ is 
$\Fql$-maximal if and only if one of the 
following conditions hold true$:$
   \begin{enumerate}[\rm(1)]
\item $r=0,$ $\ell$ even$,$ $p\equiv 1\pmod{4},$ and $a$ is not a 
square in $\Fql;$
\item $r=0,$ $\ell$ even$,$ $p\equiv 3\pmod{4},$ and either 
$n\ell/2$ odd and $a$ is a square in $\Fql,$ or $n\ell/2$ even and $a$ 
is not a square in $\Fql;$
\item $r\geq 1,$ $2r$ divides $\ell$ such that $\ell/2r$ is odd, 
$f(x)$ is not a permutation polynomial over $\Fql.$
   \end{enumerate}
   \end{theorem}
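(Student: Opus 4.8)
The plan is to assemble Theorem \ref{max} by simply collecting the case analysis already performed in the three computational results of Section \ref{s3}, namely Theorems \ref{nabcp2}, \ref{nabcp3}, \ref{nabcp4}, together with the observations made in Cases A and B of this section. First I would record the setup: the singular point of the affine curve \eqref{beatriz} is unibranched, so the nonsingular model $\cX$ has $N_{\ell,r}(a,b,c)+1$ rational points over $\Fql$, its genus is $g=q^r(q-1)/2$ by \cite[Prop. VI.4.1]{sti}, and the Hasse--Weil bound shows that $\cX$ is $\Fql$-maximal precisely when \eqref{hw} holds, i.e. $N_{\ell,r}(a,b,c)=q^\ell+q^{\ell/2+r}(q-1)$. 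In particular the right-hand side must be an integer, which forces $q^{\ell/2}\in\ZZ$, i.e. $n\ell$ even; this gives necessary condition (i).

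Next I would establish necessary condition (ii) and the trichotomy, splitting into the two cases $\ell/u$ odd and $\ell/u$ even, with $u=\gcd(\ell,r)$. When $\ell/u$ is odd: if $p=2$ then Theorem \ref{nabcp2} yields a fluctuation term of size $q^{(\ell+u)/2}$, which is strictly smaller than the target $q^{\ell/2+r}(q-1)$ (note $r\ge u$ and the $(q-1)$ factor is missing), so no maximal curve arises in characteristic $2$; if $p>2$ then Theorem \ref{nabcp3} shows that the only way to produce a term with a factor $(q-1)$ and exponent $q^{\ell/2+r}$ is the even-$\ell$ branch with $r=0$ and $\bT(c_1)=0$ (the odd-$\ell$ subcase and the $\bT(c_1)\neq 0$ subcases never contain the $(q-1)$ factor). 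Reading off the signs in part (2) of Theorem \ref{nabcp3} then gives items (1) and (2) of the conclusion. Here $\bT(c_1)=0$ with $x_0$ the solution of $f(x)=-b^{q^r}$ is exactly condition (ii), and when $r=0$, $f$ is automatically a permutation polynomial so $x_0$ exists uniquely. When $\ell/u$ is even (so necessarily $r\ge 1$): Theorem \ref{nabcp4} requires first that $f(x)=-b^{q^r}$ have a root $x_0\in\Fql$ — otherwise $N_{\ell,r}=q^\ell$ — and then, to hit \eqref{hw} with its $q^{\ell/2+r}(q-1)$ term, we need $f$ \emph{not} a permutation polynomial, $\bT(c_1)=0$, and the sign $(-1)^{\ell/2u+1}$ to be $+1$, i.e. $\ell/(2u)$ odd; together with $q^{\ell/2+u}(q-1)=q^{\ell/2+r}(q-1)$ this forces $u=r$, hence $2r\mid\ell$ with $\ell/2r$ odd. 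That is item (3), and again $\bT(c_1)=0$ is condition (ii).

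Finally I would check the converse direction: assuming (i) and (ii), verify that in each of the three listed situations the relevant formula in Theorem \ref{nabcp3}(2) or \ref{nabcp4}(2) evaluates exactly to $q^\ell+q^{\ell/2+r}(q-1)$, so that \eqref{hw} holds and $\cX$ is $\Fql$-maximal; and conversely, if (i) and (ii) hold but none of (1)--(3) holds, the corresponding formula gives either a strictly smaller positive fluctuation or one of the wrong sign, so $\cX$ is not maximal. The only genuinely delicate point — really the one place one must be careful rather than merely bookkeeping — is matching exponents and the missing/present $(q-1)$ factor correctly: one must confirm that $q^{\ell/2+r}(q-1)$ can be produced \emph{only} by the ``$\bT(c_1)=0$'' branches (which carry the $(q-1)$) and, in Case B, only when the $q^{\ell/2+u}$ exponent coincides with $q^{\ell/2+r}$, i.e. $u=r$; everything else is a direct transcription of the signs from Section \ref{s3}.
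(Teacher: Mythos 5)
Your proposal is correct and follows essentially the same route as the paper's Section \ref{sectionmax}: the Hasse--Weil bound with $g=q^r(q-1)/2$ converts maximality into the count (\ref{hw}), which forces $n\ell$ even, and conditions (i), (ii) together with the trichotomy (1)--(3) are then read off from Theorems \ref{nabcp2}, \ref{nabcp3} and \ref{nabcp4} by splitting on the parity of $\ell/u$, exactly as in the paper's Cases A and B. The one step you rightly flag as delicate --- that the factor $q-1$ in $q^{\ell/2+r}(q-1)$ can only be produced by a $\bT(c_1)=0$ branch --- is precisely where the paper's argument is also tacit, and both versions silently assume $q>2$ there; for $q=2$ one has $q-1=1$ and, for instance, $y^2+y=x^3+c$ with $\bT(c)\neq 0$ is an $\mathbb{F}_{16}$-maximal instance of (\ref{beatriz}) with $\ell=4$, $r=1$ that falls outside conditions (ii) and (3), so the claimed necessity of $\bT(c_1)=0$ (and hence the statement itself) needs $q>2$ in Case B.
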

	
   \begin{remark}\label{rem4.1} Under conditions (i), (ii) of Theorem \ref{max}, the curve $\cX_{\ell,r}(a,b,c)$ is 
$\Fql$-minimal (in the sense that the lower Hasse-Weil bound above is attained) if and only if one of the following conditions hold 
true:

   \begin{enumerate}[$(1')$]
	
\item $r=0,$ $\ell$ even$,$ $p\equiv 1\pmod{4},$ and $a$ is a 
square in $\Fql;$
\item $r=0,$ $\ell$ even$,$ $p\equiv 3\pmod{4},$ and either 
$n\ell/2$ odd and $a$ is not a square in $\Fql,$ or $n\ell/2$ even and 
$a$ is a square in $\Fql;$
\item $r\geq 1,$ $2r$ divides $\ell$ such that $\ell/2r$ is even, 
$f(x)$ is not a permutation polynomial over $\Fql.$
   \end{enumerate}
   \end{remark}
   \begin{remark}\label{rem4.11} There are examples of maximal curves 
for each case in Theorem \ref{max}; cf. {\cite[Thm. 3.3]{c3}}, 
{\cite[Thm. 1]{saeed}}.
   \end{remark}
   \begin{remark}\label{rem4.2} The notation is as in Theorem \ref{max}. Let 
$p\equiv 3\pmod{4}$ and $n\ell/2$ be odd. If $a$ is a nonzero square, 
then $\cX=\cX_{\ell,0}(a,b,c)$ can be defined by an equation of type
   $$
y^q-y=x^2+c'\, ,
   $$
where $c'\in\Fql$. Since the solution in $\Fql$ of $f(x)=2x=0$ is 
$x_0=0$, by Theorem \ref{max} $\bT(c')=0$ and thus $\cX$ is 
uniquely defined by 
  $$
y^q-y=x^2\, .
  $$
This example is missing in {\cite[Thm. 8.12]{c3}} and it is a particular 
case in {\cite[Thm. 1]{saeed}}. 
   \end{remark}
   \begin{remark}\label{rem4.3} Let $\cX=\cX_{\ell,r}(a,b,c)$ be a 
$\Fql$-maximal 
curve satisfying Theorem \ref{max}(3) with $\ell=2r$. Then by {\cite[Thm. 
2.3]{garcia-ozbudak}} $\cX$ is $\Fql$-isomorphic to a curve of type 
$\cX_{\ell,r}(\alpha,0,0)$, where $f(x)=\alpha^{q^r}x^{q^{2r}}+\alpha 
x$ is not a permutation polynomial. We may choose $\alpha=1$ if $p=2$; 
otherwise 
$\alpha=\varsigma^{(q^r+1)/2}$, $\varsigma$ a generator of $\Fql^*$ 
{\cite[Prop. 3.2]{c3}}.
   \end{remark}
   \begin{remark}\label{rem4.4} \c{C}ak\c{c}ak and \"Ozbudak {\cite{CO}} 
considered maximal curves that include those studied by 
Coulter {\cite{c3}}; in particular, they show that these examples are 
covered by 
Hermitian curves. As a matter of fact, there are maximal curves which 
are not 
covered by Hermitian curves; cf. {\cite{GK}}. Are maximal curves in 
(\ref{beatriz}) with $\bT(c)=0$  isomorphic to Coulter's 
curves? Is a maximal curve in (\ref{beatriz}) 
with $\bT(c)\neq 0$ isomorphic to a curve in {\cite{CO}}? Must such 
a curve  be covered by the Hermitian curve?
   \end{remark}

    \section{The arc arising from $\cH$}\label{s5}

Throughout this section we let $q=p^n$ be a power of a
prime $p$, $\ell$ an integer with $\ell\geq 3$, and 
$r=r(\ell)$ be the integer defined in (\ref{r1}); in particular, 
$u=\gcd(\ell,r)=1$. We are interested in the arc property derived from 
the pointset  
  $$
\cK=\cH(\Fql)\subseteq PG(2,q^\ell)
  $$ 
defined from the set of $\Fql$-rational points of the curve $\cH$ 
introduced in Section \ref{s2}. By Theorem \ref{herivelto}, $\cK$ is  an 
$(N,d)$-arc with parameters
   \begin{equation}\label{eq5.1}
N=q^{2\ell-1}+1\qquad {\rm and}\qquad  
d=q^{\ell-1}+q^{r-1}\, .
   \end{equation}
By (\ref{n=qm}), the 
degree $d$ of the arc is also closely related to the 
number $N_\ell(b,c):=N_{\ell,r}(1,b,c)$ of 
$\Fql$-affine points of Artin--Schreier curves of type (\ref{AS}), 
namely
   $$
y^q-y=x^{q^r+1}+bx+c\, ,
   $$
where $b,c\in \Fql$. We have $N_\ell(b,c)\leq qd$. The numbers 
$N_\ell(b,c)$ can be deduced directly from 
Theorems \ref{nabcp2}, \ref{nabcp3}, \ref{nabcp4} above. For the sake 
of convenience we explicitly state such computations below.
   \begin{lemma}\label{lemma5.1} Consider the same notation as 
above$;$ in particular$,$ 
$q=p^n$ with $p$ a prime and $\ell$ is an integer with $\ell\geq 3$, 
$b,c\in\Fql$. In 
addition$,$ let $\bT:\Fql\to\Fq$ be the 
trace map and let $\chi_1$ be the canonical additive character of 
$\Fql$. Let $f(x)=x^{q^{2r}}+x$ with $r$ as in {\rm(\ref{r1})}.

{\rm(1)} Suppose that $\ell$ is odd$.$ 
   \begin{enumerate}
   \item[\rm(i)] Let $p=2$. If 
$\bT(b)=0,$ then $N_{\ell}(b,c)=q^{\ell};$ 
otherwise$,$ 
   $$
N_{\ell}(b,c)=q^{\ell}+\chi_1(\omega^{q^r+1}+\omega)
\left(\frac{2}{\ell}\right)^{n}q^{(\ell+1)/2}\, \chi_1(\bT(b)^{-2}\, 
c)\, ,
   $$ 
where $\omega\in\Fql$ is such that 
$b\,\bT(b)^{-1}=\omega^{q^{2r}}+\omega+1,$ 
and $(\frac{2}{v})$ is the Jacobi symbol.
  \item[\rm(ii)] Let $p>2$. Let $x_0$ be 
the solution of 
$f(x)=-b^{q^r}$ (cf. Remark \ref{solvable}). Let $\eta$ be the quadratic 
character of 
$\Fq$. Set $c_1=ax_0^{q^r+1}-c.$ If $\bT(c_1)=0,$ then 
$N_{\ell}(b,c)=q^\ell;$ otherwise$,$
    \begin{equation*}
N_{\ell}(b,c)=q^\ell+
   \begin{cases}
q^{(\ell+1)/2}\, \eta(\bT(c_1)), &\text{if $p\equiv 1\pmod{4}$}\\
(-1)^{n(\ell+1)/2}\, q^{(\ell+1)/2}\, \eta(\bT(c_1)), 
&\text{if $p\equiv 3\pmod{4}$,\, $b=0$} \\
(-1)^{n(3\ell+1)/2}\, q^{(\ell+1)/2}\, \eta(\bT(c_1)), 
&\text{if $p\equiv 3\pmod{4}$,\, $b\neq 0$}\, .
   \end{cases}
  \end{equation*}
  \end{enumerate}
{\rm(2)} Suppose that $\ell$ is even$.$ If 
$f(x)=-b^{q^r}$ has no roots in $\Fql,$ then 
$N_{\ell}(b,c)=q^{\ell};$ otherwise$,$ let 
$x_0\in\Fql$ be a root of $f(x)=-b^{q^r}.$ Set $c_1=ax_0^{q^{r}+1}-c.$ 
   \begin{enumerate}
\item[(i)] If $f(x)$ is a permutation polynomial over $\Fql,$ then
   \begin{equation*}
N_{\ell}(b,c)=q^\ell+
  \begin{cases}
(-1)^{\ell/2}q^{\ell/2}(q-1), & \text{if $\bT(c_1)=0$}\\
(-1)^{\ell/2+1}q^{\ell/2}, & \text{if $\bT(c_1)\neq0$}\, .
    \end{cases}
   \end{equation*}
\item[(ii)] If $f(x)$ is not a permutation polynomial, then
  \begin{equation*}
N_{\ell}(b,c)=q^{\ell}+
  \begin{cases}
(-1)^{\ell/2+1}q^{\ell/2+1}(q-1), & \text{ if  $\bT(c_1)=0$}\\
(-1)^{\ell/2}q^{\ell/2+1}, & \text{ if $\bT(c_1)\neq 0$}\, .
  \end{cases}
  \end{equation*}
  \end{enumerate}
  \end{lemma}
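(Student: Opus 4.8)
The plan is to simply specialize the general results of Section \ref{s3} to the situation at hand, where $u = \gcd(\ell, r) = 1$ and $a = 1$. Recall that under the hypotheses of this section, $r = r(\ell)$ is defined by (\ref{r1}), and it is immediate from that definition that $\gcd(\ell, r) = 1$ in every case. Hence $\ell/u = \ell$, and the parity of $\ell/u$ is just the parity of $\ell$. This is why the statement of Lemma \ref{lemma5.1} splits into the case $\ell$ odd (part (1)) and $\ell$ even (part (2)), matching the trichotomy of Theorems \ref{nabcp2}, \ref{nabcp3}, \ref{nabcp4}.

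First I would treat part (1), $\ell$ odd. For $p = 2$ this is precisely Theorem \ref{nabcp2} with $u = 1$ and $a = 1$, so that $a_1 = 1$ and $N_{\ell,r}(1,b,c) = N_\ell(b,c)$: the condition ${\bT}_u(b) \notin \Fq^*$ becomes $\bT(b) = 0$ (as $\bT = {\bT}_1 \colon \Fql \to \Fq$), giving $N_\ell(b,c) = q^\ell$; otherwise one substitutes $u = 1$ into the displayed formula and the Jacobi symbol $(\tfrac{2}{\ell/u})$ becomes $(\tfrac{2}{\ell})$, with exponent $nu = n$ and $q^{(\ell+u)/2} = q^{(\ell+1)/2}$. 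For $p > 2$ this is Theorem \ref{nabcp3}(1) with $a = 1$; note that since $\ell$ is odd and $u = 1$, the quadratic character satisfies $\eta_1(h) = \eta(h)$ for $h \in \Fq^*$, and since $a = 1$ the factor $\eta_1(a\,\bT(c_1)) = \eta(\bT(c_1))$. The three subcases of the displayed formula are copied verbatim from Theorem \ref{nabcp3}(1). I should double-check that Remark \ref{solvable}(2) guarantees $f(x) = x^{q^{2r}} + x$ is a permutation polynomial over $\Fql$ in odd characteristic, so that the solution $x_0$ of $f(x) = -b^{q^r}$ exists and is unique; this is exactly what is needed for the definition of $c_1$ to make sense. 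One small bookkeeping point: the statement writes $c_1 = a x_0^{q^r+1} - c$ with $a = 1$, which is harmless.

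Next I would treat part (2), $\ell$ even. Since $\ell/u = \ell$ is then even, this falls under Theorem \ref{nabcp4} with $a = 1$ and $u = 1$, so $\ell/(2u) = \ell/2$, $q^{\ell/2 + u} = q^{\ell/2+1}$, and the four displayed subcases (no root; $f$ a permutation polynomial with $\bT(c_1) = 0$ or $\neq 0$; $f$ not a permutation polynomial with $\bT(c_1) = 0$ or $\neq 0$) are transcribed directly, replacing $(-1)^{\ell/2u}$ by $(-1)^{\ell/2}$ and $(-1)^{\ell/2u+1}$ by $(-1)^{\ell/2+1}$. Here $f(x)$ may or may not be a permutation polynomial depending on $\ell$ and $r$, so both subcases are genuinely needed.

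The proof is therefore essentially a substitution; there is no real obstacle beyond the routine verification that $\gcd(\ell, r(\ell)) = 1$ for all $\ell \geq 3$, which is checked case by case from (\ref{r1}) (for $\ell$ odd, $\gcd(\ell, (\ell+1)/2) = \gcd(\ell, \ell+1) = 1$ since $\ell$ is odd; for $\ell \equiv 0 \pmod 4$, $\gcd(\ell, \ell/2+1)$ divides $\gcd(\ell, \ell+2) = 2$ but $\ell/2+1$ is odd; for $\ell \equiv 2 \pmod 4$, $\ell \geq 6$, $\gcd(\ell, \ell/2+2)$ divides $\gcd(\ell, \ell+4) = 4$ and one checks $\ell/2+2$ is not divisible by $2$... actually $\ell/2$ is odd so $\ell/2+2$ is odd, hence the gcd is $1$), and the observation that $\bT$ in this section coincides with ${\bT}_1$ and $\eta_1$ restricts to $\eta$ on $\Fq$ when $\ell$ is odd. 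Thus I would simply write: \emph{This is an immediate consequence of Theorems \ref{nabcp2}, \ref{nabcp3}(1) and \ref{nabcp4} upon setting $a = 1$, $u = \gcd(\ell, r) = 1$, and using that $\ell/u = \ell$.}
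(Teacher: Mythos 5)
Your proposal is correct and coincides with the paper's own treatment: the paper gives no separate proof of Lemma \ref{lemma5.1}, stating only that the values $N_\ell(b,c)$ ``can be deduced directly'' from Theorems \ref{nabcp2}, \ref{nabcp3} and \ref{nabcp4}, which is exactly the specialization $a=1$, $u=\gcd(\ell,r)=1$ that you carry out. Your explicit case-by-case check that $\gcd(\ell,r(\ell))=1$ for every $\ell\geq 3$ in (\ref{r1}) is a worthwhile detail the paper leaves implicit.
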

Next we are concerned with the permutation property of the polynomial $
f(x)$ which arises in the lemma above.
   \begin{remark}\label{solvable2} Let 
$f(x)=x^{q^{2r}}+x\in\Fql[x]$ 
with $q$ a power of a prime $p$, $\ell$ an integer with $\ell\geq 3$, 
and $r$ as in (\ref{r1}). If $p=2$, it is clear that $f(x)$ 
is not a permutation polynomial. If $p>2$, then 
Remark \ref{solvable} can be improved so that 
$f(x)$ is a permutation 
polynomial if and only if either $\ell$ is odd, or 
$\ell\equiv 2\pmod{4}$; see the remark after the proof of Theorem 4.1 in 
{\cite{c1}}.
   \end{remark}
Recall that $N$, $d$ and $r$ stand for the 
integers defined in (\ref{eq5.1}) 
and (\ref{r1}). 
  \begin{question}\label{question5.1} Is the pointset 
$\cK=\cH(\Fql)$ defined above a complete $(N,d)$-arc in $PG(2,q^\ell)$?
  \end{question}
{\bf Case A:} The answer to Question \ref{question5.1} is affirmative 
provided that $p\equiv 1\pmod{4}$ and $\ell$ is odd with $\ell\geq 3$.

In fact, let $P\in PG(2,q^\ell)\setminus\cK$. We shall show that 
there is a line $\cL:y+bx+c=0$ in $PG(2,q^\ell)$ 
such that $P\in\cL$ and $\#\cK\cap\cL=d$. 
If $P=(A:B:1)$, we look for $\cL$ with $c=-bA-B$ (so that $P\in\cL$). 
Let us consider the Artin--Schreier curve of type
   \begin{equation*}
y^q-y=x^{q^r+1}-Ax^{q^r}-Ax^{q^{r-1}}+B-\lambda\, ,
   \end{equation*}
where $\lambda\in\Fql$ is such that $\bT(\lambda)$ is a 
nonzero square in $\Fq$. As already mentioned in Section \ref{s3} 
(cf. {\cite[Thm. 5.8]{c3}}), 
this curve has the same number of $\Fql$-affine points as a certain 
curve of type (\ref{beatriz}). Thus, by Lemma \ref{lemma5.1}(1)(ii),
the curve above has 
at least $q^\ell-q^r\ \Fql$-affine points; let $(x_0,y_0)$ 
be one of such points and set $b:=-{x_0}^{q^r}-{x_0}^{q^{r-1}}$. Then
  $$
-b^{q^r}={x_0}^{q^{2r}}+{x_0}^{q^{2r-1}}\, ,
  $$
and thus $x_0$ is also the 
solution of the equation $f(x)=-b^{q^r}$, with 
$f(x)=x^{q^{2r}}+x$, as $2r-1=\ell$. Moreover, by 
construction,
  $$
c_1={x_0}^{q^r+1}-c=
{x_0}^{q^r+1}+bA+B=
{x_0}^{q^r+1}-A{x_0}^{q^r}-A{x_0}^{q^{r-1}}+B;
  $$
so  $\bT(c_1)=\bT(\lambda)$ is a 
nonzero square in $\Fq$. 
The result follows from Lemma \ref{lemma5.1}(1)(ii) 
and (\ref{n=qm}). 
Now let $P=(1:B:0)$. Here we look for a line of 
type $\cL: y-Bx+c=0$ 
with some $c\in\Fql$. Let $x_0\in\Fql$ be a 
solution of $f(x)=B^{q^r}$ 
(cf. Remark \ref{solvable2}) and let $c$ be such 
that $\bT({x_0}^{q^r+1}-c)$ is a nonzero square 
in $\Fq$; the result follows.

{\bf Case B:} The answer to Question 
\ref{question5.1} is also affirmative if 
$p\equiv 3\pmod{4}$ and $\ell$ is odd with 
$\ell\geq 3$. 
The proof is similar to Case A and here 
we choose $\lambda\in\Fql$ according to the 
parity of either $n(\ell+1)/2$ or $n(3\ell+1)/2$.

{\bf Case C:} Let $p>2$ and $\ell$ be even with 
$\ell\geq 6$ and 
$\ell\equiv 2\pmod{4}$. Then the answer to 
Question \ref{question5.1} is negative.

In fact, here $\cK$ is a complete 
$(N,d_1)$-arc with 
$d_1=q^{\ell-1}+q^{r-3}$ (which is clearly less 
than the degree $d$ of $\cH$). 
To see this, let $\cL$ be a line in $PG(2,q^\ell)$ 
defined by the 
equation $\alpha X+\beta Y+\gamma Z=0$. We claim 
that $\#\cK\cap\cL\leq 
d_1$. If $\beta=0$, then it is easy to see that 
$\#\cK\cap\cL\leq 
q^{\ell-1}$. For $\beta\neq 0$, the claim 
follows from Lemma \ref{lemma5.1}(2)(i) 
as $\ell/2=r-2$ and $f(x)$ is a permutation 
polynomial (see Remark \ref{solvable2}).

Now we prove the completeness of the 
$(N,d_1)$-arc $\cK$. The proof is similar to 
Case A. Let $P\in 
PG(2,q^\ell)\setminus 
\cK$. If $P=(A:B:1)$, we look for a line 
$\cL:\:y+bx+c=0$ such that  
$c=-bA-B$ and $\#\cK\cap\cL=d_1$. Let us consider 
the Artin--Schreier curve of type
   $$
y^q-y=x^{q^r+1}-Ax^{q^r}-Ax^{q^{r-4}}+B-\lambda\, ,
   $$
where $\lambda\in\Fql$ is such that 
$\bT(\lambda)\neq0$. We see that this 
curve has at least 
$q^{\ell}-q^{r-2}(q-1)$ $\Fql$-affine points. 
Let $(x_0,y_0)$ be one of 
these points, and let $b:=-x_0^{q^r}-x_0^{q^{r-4}}$. 
Therefore
$f(x_0)=-b^{q^r}$ since $2r-4=\ell$. Also, 
by construction, 
$\bT(x_0^{q^r+1}-c)=\bT(\lambda)\neq0$. 
Now the 
result follows from Lemma \ref{lemma5.1}(2)(i) 
and (\ref{n=qm}). Finally, 
let $P=(1:B:0)$ and $x_0\in\Fql$ be a 
solution of $f(x)=B^{q^r}$ (cf. Remark 
\ref{solvable2}); choose $c\in\Fql$ such that 
$\bT(x_0^{q^r+1}-c)\neq 0$. Then the 
line $\cL:y-Bx+c=0$ is such that $P\in\cL$ and 
$\#\cK\cap\cL=d_1$ by Lemma 
\ref{lemma5.1}(2)(i).

{\bf Case $\mathbf{D_1}$:} Let $p=2$ and $\ell$ be odd 
with $\ell\geq 3$. We assume 
$q=2^n$ with $n$ even; otherwise, we assume $n$ odd 
and $\ell\equiv\pm 1\pmod{8}$. Here the answer to Question \ref{question5.1} 
is also negative.

In fact, let us consider the following set:
  $$
\bar\cK:=\{(1:B:0)\in PG(2,q^\ell): \bT(B)=0\}\, .
  $$
We claim that the pointset 
  $$
\cK_1:=\cK\cup \bar\cK
  $$ 
is a complete $(N_1,d)$-arc in 
$PG(2,q^\ell)$ with $N_1=N+q^{\ell-1}$. That 
$\cK_1$ is an $(N_1,d)$-arc is clear by 
Lemma \ref{lemma5.1}(1)(i); next we prove 
its completeness. Let $P\in 
PG(2, \Fql)\setminus \cK_1$.

If $P=(A:B:1)$, we look for a line 
$\cL:\:y+bx+c=0$ with $c=-bA-B$ such 
that $\#\cK_1\cap\cL=d$. 
Let $\gamma \in \Fq^*$ and consider the 
Artin--Schreier curve of type
  $$
y^q-y=x^{q^r+1}+x-
(x^q+x+1)A\gamma^{-1}-B\gamma^{-2}\, .
  $$
Arguing as in Case A, we can see that this curve 
has at least one affine 
$\Fql$-point, say $(x_0,y_0)$. We let 
$b:=(x_0^{q^{2r}}+x_0+1)\gamma$. Then, 
as $2r=\ell+1$ and $p=2$, $\bT(b)=\gamma$ so that 
$b{\bT(b)}^{-1}=x_0^{q^{2r}}+x_0+1$. After some 
computation,
    $$
x_0^{q^r+1}+x_0+{\bT(b)}^{-2}c=y_0^q-y_0
    $$
and, by the transitivity of the trace map,
   $$
N_\ell(b,c)=q^\ell+\left(\frac{2}{\ell}\right)^{n}q^r
   $$
by Lemma \ref{lemma5.1}(1)(i); 
the result follows. Now let 
$P=(1:B:0)$ 
with $\bT(B)\neq0$. We look for a line $\cL: 
y-Bx+c=0$ with $\#\cK_1\cap\cL=d$. 
Let $\omega \in\Fql$ be such that 
$B\,{\bT(B)}^{-1}=\omega^{q^{2r}}+\omega+1$ 
(see Remark \ref{solvable}). 
Define $c=(\omega^{q^r+1}+\omega){\bT(B)}^2$. 
Then 
   $$
\omega^{q^r+1}+\omega+c{\bT(B)}^{-2}=0\, ,
  $$
and the result follows again from Lemma 
\ref{lemma5.1}(1)(i). 

{\bf Case $\mathbf{D_2}$:} Let $p=2$ and $\ell$ be odd 
with $\ell\geq 3$. We assume 
$q=2^n$ with $n$ odd and $\ell\equiv\pm 
3\pmod{8}$. Here the answer to Question \ref{question5.1} 
is also negative. 

In fact, let us consider the set
  $$
  \bar\cK:=\{(1:B:0)\in PG(2,q^\ell): \bT(B)\neq 0\}\, .
  $$
We claim that the pointset 
  $$
\cK_1:=\cK\cup \bar\cK
  $$ 
is in fact a complete $(N_1,q^{\ell-1})$-arc in 
$PG(2,q^\ell)$ with $N_1=N+q^\ell-q^{\ell-1}$. 
That $\cK_1$ is an $(N_1,q^{\ell-1})$-arc is clear. To see its completeness, let $P\in 
PG(2,q^\ell)\setminus \cK_1$. Let $P=(A:B:1)$ 
and let $\cL$ be the line $y+bx+c=0$ with 
$c=-bA-B$ so that $P\in \cL$; if we let $\bT(b)=0$, then 
$\#\cK_1\cap\cL=q^{\ell-1}$ by 
Lemma \ref{lemma5.1}(1)(i). Now let $P=(1:B:0)$ 
with $\bT(B)=0$; here we let $\cL$ be the 
line $y-Bx=0$ and the result follows by Lemma 
\ref{lemma5.1}(1)(i) again.

{\bf Case E:} Let $p\geq 2$ be a prime and $\ell$ be 
even with $\ell\geq 4$ and $\ell\equiv 0\pmod{4}$. Here the answer to Question \ref{question5.1} is 
also negative.

In fact, set $f(x)=x^{q^{2r}}+x$ and let $H$ be the set of elements $B\in\Fql$ 
such that 
the equation $f(x)=B^{q^r}$ has a solution in 
$\Fql$. Let us fix a set 
$H_1\subseteq \Fql\setminus H$ with 
$\# H_1=q^{\ell-1}+q^{r-1}-1$; this selection of 
$H_1$ is possible since $\#H\leq q^{\ell-2}$. 
Then the pointset
  $$
\cK_2:=\cK\cup\{(1:B:0)\in PG(2,q^\ell):B\in H_1\}
  $$ 
is a complete $(N_2,d)$-arc, with 
$N_2=N+\# H_1=q^{2\ell-1}+q^{\ell-1}+q^{r-1}$. 

Arguing as in Case C, it is easy to see 
that $\cK_2$ is in fact an 
$(N_2,d)$-arc. To derive its completeness, let $P\in 
PG(2,q^\ell)\setminus \cK_2$. If $P=(A:B:1)$, we 
proceed as in Case C by 
means of Remark \ref{solvable2} and 
Lemma \ref{lemma5.1}(2)(ii). Let now 
$P=(1:B:0)$ with $B\in H$, and $x_0\in\Fql$ a 
solution for 
$f(x)=B^{q^r}$. Let $c\in\Fql$ such that 
$\bT(x_0^{q^r+1}-c)\neq0$ 
and consider the line $y-Bx+c=0$; the result 
follows.

{\bf Case F:} Let $p=2$ and $\ell$ be even with 
$\ell\geq 4$ and $\ell\equiv 2\pmod{4}$. In this case, the answer to Question 
\ref{question5.1} is also negative.

In fact, let $H$ be the set defined in Case E and 
let us fix 
a set $H_2\subseteq \Fql\setminus H$ such that 
$\# H_2=q^{\ell-1}+q^{r-2}(q-1)-1$. Then the pointset 
   $$ 
\cK_3:=\cK\cup\{(1:B:0):B\in H_2\}
   $$ 
is a complete $(N_3,d_2)$-arc with 
\[
N_3=N+\# H_2=q^{2\ell-1}+q^{\ell-1}+q^{r-2}(q-1),\quad 
d_2=q^{\ell-1}+q^{r-2}(q-1).
\]
 The proof of this case is analogous to 
Case E by using Lemma \ref{lemma5.1}(2)(ii) once 
again.

We summarize the above computations in the following.
  \begin{theorem}\label{main-result} Let $\cH$ 
be the plane curve over $\Fql$ defined in Section 
\ref{s2}, where $q=p^n$ is a power of a prime 
$p\geq 2$ and $\ell$ 
is an integer with 
$\ell\geq 3$. 
Let $\cK=\cH(\Fql)\subseteq PG(2,q^\ell)$ be the 
set of $\Fql$-rational 
points of $\cH$. Let $N=\#\cH(\Fql)=q^{2\ell-1}+1$ 
and $d=q^{\ell-1}+q^{r-1}$ be the number of 
$\Fql$-rational points and the 
degree of $\cH,$  where $r$ is the 
integer defined in {\rm(\ref{r1})}.
   \begin{enumerate}
   \item[\rm(1)] If $p>2$ and $\ell$ is odd$,$ 
then $\cK$ 
is a complete $(N,d)$-arc in $PG(2,q^\ell);$
   \item[\rm(2)] If $p>2$ and $\ell$ is even 
with 
$\ell\equiv 2\pmod{4},$ then 
$\cK$ is a complete $(N,d_1)$-arc in $PG(2,q^\ell)$ 
with $d_1=q^{\ell-1}+q^{r-3};$
  \item[\rm(3)] Let $p=2$ and $\ell$ be odd$.$ 
Suppose that $n$ is even or 
$\ell\equiv \pm 1\pmod{8}$. Let us define the 
set $
\bar\cK:=\{(1:B:0)\in 
PG(2,q^\ell): \bT(B)=0\},$ 
being $\bT:\Fql\to\Fq$ the trace 
map. Then the pointset 
  $$
\cK_1:=\cK\cup\bar\cK
  $$ 
is a complete $(N_1,d)$-arc$,$ with 
$N_1=N+q^{\ell-1};$
    \item[\rm(4)] Let $p=2$ and $\ell$ be 
odd$.$ Suppose that $n$ is odd and 
$\ell\equiv \pm 3\pmod{8}$. Let us define the set
   $
\bar\cK:=\{(1:B:0)\in PG(2,q^\ell): \bT(B)
\neq 0\},$ 
being $\bT:\Fql\to\Fq$ the trace 
map. Then the pointset 
  $$
\cK_1:=\cK\cup\bar\cK
  $$ 
is a complete $(N_1,q^{\ell-1})$-arc$,$ 
with $N_1=N+q^\ell-q^{\ell-1};$  
   \end{enumerate}
Set $H:=\{B\in\Fql :x^{q^{2r}}+x=B^{q^r}\, 
{\rm has\, a\, solution\, in}\, \Fql\}.$
  \begin{enumerate}
   \item[\rm(5)] Let $p\geq 2$ and $\ell$ be 
even 
with $\ell\equiv 
0\pmod{4}$. Let $H_1$ be a subset of the 
complement of $H$ in $\Fql$ whose 
size is $q^{\ell-1}+q^{r-1}-1.$ Then the 
pointset 
  $$
\cK_2=\cK\cup\{(1:B:0): B\in H_1\}
  $$ 
is a complete 
$(q^{2\ell-1}+q^{\ell-1}+q^{r-1},d)$-arc$;$
  \item[\rm(6)] Let $p=2,$ and $\ell$ be even 
with $\ell\equiv 2\pmod{4}.$ Let 
$H_2$ be a subset of the complement of $H$ in 
$\Fql$ whose 
size is $q^{\ell-1}+q^{r-2}(q-1)-1.$ Then the 
pointset 
  $$
\cK_3=\cK\cup\{(1:B:0): B\in H_2\}
  $$ 
is a complete $(q^{2\ell-1}+q^{\ell-1}+
q^{r-2}(q-1),q^{\ell-1}+q^{r-2}(q-1))$-arc$.$
   \end{enumerate}
   \end{theorem}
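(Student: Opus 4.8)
This statement collects the conclusions of the case analysis above --- parts~(1)--(6) are the outcomes of Cases~A+B, C, $D_1$, $D_2$, E, and F respectively --- so the plan is to assemble those, the uniform mechanism being as follows. The arc behaviour of $\cK=\cH(\Fql)$ is governed entirely by the numbers $N_\ell(b,c)$ of Lemma~\ref{lemma5.1}. Indeed $\cH$ has the unique point $(0:1:0)$ at infinity, which lies in $\cK$; and a line of $PG(2,q^\ell)$ is either $Z=0$, a vertical line $X=\gamma Z$, or a line $\cL_{b,c}:\, Y+bX+cZ=0$. Since $\cL_{b,c}$ contains no point of $\cK$ at infinity and its affine trace on $\cH$ consists of the $\Fql$-points $(x,-bx-c)$ with $\bT(x^{q^r+1}+bx+c)=0$, relation~(\ref{n=qm}) yields
$$
\#(\cK\cap\cL_{b,c})=M_\ell(b,c)=\frac{1}{q}\,N_\ell(b,c),
$$
while $Z=0$ meets $\cK$ only in $(0:1:0)$ and $X=\gamma Z$ meets it in $(0:1:0)$ together with the $q^{\ell-1}$ affine points of that abscissa; since $\ell\ge 3$ (so $r\ge 2$), this last total $q^{\ell-1}+1$ lies below every degree occurring in the statement. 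Thus the lines through $(0:1:0)$ are harmless and everything reduces to Lemma~\ref{lemma5.1}, to be read together with the permutation dichotomy for $f(x)=x^{q^{2r}}+x$ of Remark~\ref{solvable2}.

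Two points must then be settled in each of the six parts. \emph{The degree.} One reads off $\max_{\cL}\#(\cK\cap\cL)$ from Lemma~\ref{lemma5.1}; this is the value $d^{\ast}$ announced in the corresponding part, equal to $d=q^{\ell-1}+q^{r-1}$ in most parts and to the smaller $d_1$, resp.\ $d_2$, in parts~(2) and~(6), because there Lemma~\ref{lemma5.1}(2) supplies only a smaller second-order term. When points at infinity are adjoined (parts~(3)--(6)), the only line whose intersection with the enlarged pointset can grow is $Z=0$, so the adjoined set is taken small enough that $Z=0$ still meets at most $d^{\ast}$ of its points; and when the natural set of ``bad'' infinite points is too large (parts~(5),~(6)), one adjoins exactly $d^{\ast}-1$ of them, thereby turning $Z=0$ into a $d^{\ast}$-secant. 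Feasibility of such a selection needs only the bound $\#H\le q^{\ell-2}$, which follows from the $\Fql$-linearity of $f$ and from $\ker f\supseteq\mathbb{F}_{q^2}$ when $\ell$ is even.

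\emph{Completeness.} For $P$ outside the (possibly enlarged) arc one must produce a $d^{\ast}$-secant line through $P$. If $P=(1:B:0)$, the candidate lines are the $\cL_{-B,c}$; one solves $f(x)=B^{q^r}$ for $x_0$ (always possible when $f$ is a permutation polynomial, cf.\ Remark~\ref{solvable2}, and otherwise exactly the hypothesis $B\in H$ imposed in parts~(5),~(6)) and chooses $c$ so that $\bT(x_0^{q^r+1}-c)$ is a nonzero square, nonzero, or zero --- whichever the relevant clause of Lemma~\ref{lemma5.1} requires for $N_\ell(-B,c)=q\,d^{\ast}$ --- which is possible since $c\mapsto\bT(x_0^{q^r+1}-c)$ is onto $\Fq$. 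The ``bad'' infinite points not reached in this way lie on $Z=0$, which by the preceding paragraph is a $d^{\ast}$-secant; this is precisely why those points were adjoined to $\cK$. If $P=(A:B:1)$, one sets $c=-bA-B$ (so $P\in\cL_{b,c}$ for every $b$) and forces the needed value of $\bT(c_1)$ through an auxiliary Artin--Schreier curve of the shape
$$
y^q-y=x^{q^r+1}-Ax^{q^r}-Ax^{q^{r-j}}+B-\lambda,\qquad 2r-j=\ell,
$$
with $\bT(\lambda)$ preset to the required type (and the analogue of Cases~$D_1$--$D_2$ in characteristic two with $\ell$ odd). After the standard reduction to type~(\ref{beatriz}) (cf.\ \cite[Thm. 5.8]{c3}), Lemma~\ref{lemma5.1} ensures an affine $\Fql$-point $(x_0,y_0)$; setting $b:=-x_0^{q^r}-x_0^{q^{r-j}}$ one gets $-b^{q^r}=x_0^{q^{2r}}+x_0^{q^{2r-j}}=f(x_0)$ by $2r-j=\ell$, so $x_0$ is a root of $f(x)=-b^{q^r}$, while $c_1=x_0^{q^r+1}-c=(y_0^q-y_0)+\lambda$, hence $\bT(c_1)=\bT(\lambda)$. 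Lemma~\ref{lemma5.1} and~(\ref{n=qm}) then make $\cL_{b,c}$ the sought $d^{\ast}$-secant through $P$.

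I expect the principal obstacle to be the bookkeeping that ties these pieces together branch by branch: deciding which sign of the $O(q^{\ell/2})$-term of $N_\ell(b,c)$ is actually attained --- this depends on the parities of $n$ and of $\ell$ and, in characteristic two, on the Jacobi symbol $(\frac{2}{\ell})$, which is exactly why the $p=2$, $\ell$ odd situation splits according to $\ell\bmod 8$ --- and then calibrating the cardinality of the adjoined infinite set to that sign so that the enlarged pointset is at once an arc of the announced degree and complete. With Lemma~\ref{lemma5.1} and Remark~\ref{solvable2} in hand, each single verification is routine; the work lies in keeping all the parity cases coherent.
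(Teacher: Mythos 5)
Your plan follows the paper's own route exactly: the same reduction of line counts to $N_\ell(b,c)$ via (\ref{n=qm}), the same appeal to Lemma \ref{lemma5.1} and Remark \ref{solvable2}, and the same auxiliary curve $y^q-y=x^{q^r+1}-Ax^{q^r}-Ax^{q^{r-j}}+B-\lambda$ with $2r-j=\ell$ to manufacture a $d^{\ast}$-secant through a given affine point. The completeness half is therefore in order. The gap is in the verification that the (enlarged) pointsets are arcs of the announced degree. Your assertion that \emph{the only line whose intersection with the enlarged pointset can grow is $Z=0$} is false: every line $Y-BX+cZ=0$ passes through $(1:B:0)$, so adjoining that point increases the count on all $q^{\ell}$ such lines by one, and one must check $M_\ell(-B,c)+1\le d^{\ast}$ for every adjoined $B$ and every $c$. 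This does hold in parts (3), (5), (6), but only because the adjoined $B$ satisfy $\bT(B)=0$, respectively $B\notin H$, which by Lemma \ref{lemma5.1} forces $M_\ell(-B,c)=q^{\ell-1}$ for all $c$; your write-up never invokes this, and without it the degree claims are unproved.

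The omission is not cosmetic, because in part (4) the check genuinely fails. There $d^{\ast}=q^{\ell-1}$, so already the vertical lines $X=\gamma Z$, which meet $\cK$ in $q^{\ell-1}+1$ points, contradict your claim that $q^{\ell-1}+1$ ``lies below every degree occurring in the statement''. Moreover $\#\bar\cK=q^{\ell}-q^{\ell-1}$, so $Z=0$ carries $q^{\ell}-q^{\ell-1}+1>q^{\ell-1}$ points of $\cK_1$, violating your own requirement that the adjoined set be ``small enough''; and for $\bT(b)\neq 0$ Lemma \ref{lemma5.1}(1)(i) gives $M_\ell(b,c)=q^{\ell-1}\pm q^{r-1}$ with both signs attained as $c$ varies, so some lines through points of $\bar\cK$ meet $\cK_1$ in $q^{\ell-1}+q^{r-1}+1$ points. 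Hence your argument cannot close part (4) as stated. (The paper's own Case $\mathrm{D}_2$ dismisses the arc property there as ``clear'', so the difficulty is inherited rather than introduced; but your blanket reductions hide it instead of confronting it.) For the remaining five parts the plan is sound once the missing line-by-line degree check for the adjoined points is supplied.
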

   \begin{remark}\label{italianos} Let $q$ be a 
power of an odd prime and $\ell$ be a positive 
even integer. Let 
$B$ be a subset of $\mathbb{F}_{q^{\ell/2}}^*$  of size $b$ with $1\leq b\leq 
b^{\ell/2-1}$. In 
{\cite{italianos}} the following union of Hermitian 
curves over $\Fql$ 
  $$
\cX_B:\,\prod_{\lambda\in B}(\lambda X^{q^{\ell/2}+1}
+XY^{q^{\ell/2}}+
X^{q^{\ell/2}}Y+Z^{q^{\ell/2}+1})=0
  $$ 
is considered. The pointset $\cX_B(\Fql)$ is a 
complete 
$(q^\ell q^{\ell/2}b 
+1,b(q^{\ell/2}+1))$-arc; in particular, 
if $b=q^{\ell/2-1}$ we 
obtain a complete 
$(q^{2\ell-1}+1,q^{\ell-1}+q^{\ell/2-1})$-arc in 
$PG(2,q^\ell)$. For $\ell\geq6$ and 
$\ell\equiv2\pmod{4}$, this arc has the same 
parameters as the arc 
$\cK=\cH(\Fql)$ in Theorem \ref{main-result}(2). 
However, these arcs are not isomorphic. 
In fact, if they were so there would exist a 
collineation $T$ on $PG(2,q^{\ell})$ 
such that $T(\cK)=\cX_B(\Fql)$. By B{\'e}zout's 
Theorem there are at most 
$(q^{\ell-1}+q^{r-1})(q^{\ell-1}+q^{r-3})$ 
points in the intersection of $\cH$ and $\cX_B$, 
which is a contradiction as $\#\cK=q^{2\ell-1}+1$.
   \end{remark}
   \begin{remark}\label{subset} The 
construction of the arcs in Theorem 
\ref{main-result}(5)(6) seem to be not canonical 
in the sense that it 
might depend of the selection of certain subsets 
of $\Fql$. As a matter of fact, we even do not 
know if the smallest case $q=2$ 
and $\ell=4$ would provided with at least two 
non-isomorphic complete 
$(140,12)$-arcs in $PG(2,16)$.
   \end{remark}
	
\subsection*{Acknowledgments.} 
We thank M. Giulietti, 
J.W.P Hirschfeld, G. Korchm\'aros, J. Moyano-Fernandez, and D.
Panario for useful comments. \\ H. Borges was partially supported by FAPESP-Brazil Grant 2011/19446-3, 
B. Motta was partially supported by CAPES-Brazil and CNPq-Brazil, and 
F. Torres was partially supported by CNPq-Brazil Grant 306324/2011-3.

   \end{document}